\newtheorem{lem}{Lemma}
\journal{}
\begin{document}

\begin{frontmatter}



\title{A level-set based finite difference method for the ground state Bose-Einstein condensates in smooth bounded domains}

 \author[label1]{Hwi Lee\corref{cor1}}
 \affiliation[label1]{organization={New York Institute of Technology},
             addressline={Department of Mathematics},
             city={Old Westbury},
             postcode={11568},
             state={New York},
             country={USA}}
 \ead{hlee50@nyit.edu}

 \cortext[cor1]{Corresponding author}
 \author[label2]{Yingjie Liu}         
 \affiliation[label2]{organization={Georgia Institute of Technology},
             addressline={School of Mathematics},
             city={Atlanta},
             postcode={30332},
             state={Georgia},
             country={USA}}
\ead{yingjie@math.gatech.edu}

\begin{abstract}
We present a level-set based finite difference method to calculate the ground states of Bose Einstein condensates in domains with curved boundaries. Our method draws on the variational and level set approaches, benefiting from both of their long-standing success. More specifically, we use the normalized gradient flow, where the spatial discretization is based on the simple Cartesian grid with fictitious values in the outer vicinity of the domains. We develop a PDE-based extension technique that systematically and automatically constructs ghost point values with third-order accuracy near irregular boundaries, effectively circumventing the computational complexity of interpolation in these regions. Another novel aspect of our work is the application of the PDE-based extension technique to a nodal basis function, resulting in an explicit ghost value mapping that can be seamlessly incorporated into implicit time-stepping methods where the extended function values are treated as unknowns at the next time step. We present numerical examples to demonstrate the effectiveness of our method, including its application to domains with corners and to problems involving higher-order interaction terms.

\end{abstract}



\begin{keyword}
Bose Einstein condensate \sep Gross–Pitaevskii equation \sep Level-set method \sep Normalized gradient flow.



\end{keyword}

\end{frontmatter}



\section{Introduction}
\label{int}
The Bose-Einstein condensates (BECs) exhibited by dilute bosonic gases at ultralow temperatures have been a critical research area in quantum physics since they were first experimentally realized \cite{anderson1995observation}, confirming the theoretical predictions \cite{bose1924plancks,einstein2005quantentheorie}. The importance of BECs is evidenced by their intimate connection to macroscopic quantum phenomena such as superfluidity, and significant progress has been made in experimental, theoretical, and numerical studies of the BECs \cite{ott2001bose,lieb2002proof,markowich2003}. In particular, recent years have witnessed the emergence of extensive research activities on BECs in curved geometries such as shell-shaped BECs \cite{jia2022expansion,carollo2022observation} and quantum vortices subject to geometric constraints \cite{tononi2024quantum}, and geometrically confined BECs \cite{van2020perturbation, van2025geometrically}. 

The most commonly used mathematical model for a BEC can be expressed in terms of its macroscopic wave function $u(x,t)$, which evolves according to the time-dependent, dimensionless, nonlinear Schr\"{o}dinger equation , or commonly known as the Gross-Pitaevskii equation (GP) \cite{erdHos2010derivation}
\begin{align}
i \partial_t \psi(x,t)  &= -\frac{1}{2} \Delta \psi(x,t) + V(x) \psi(x,t) + \beta |\psi(x)|^2 \psi(x), \quad x  \in \Omega , \quad t > 0 \label{eq:nls} \\
\psi(x,t) &= 0, \quad x \in \partial \Omega
\end{align}
where $\Omega \subset \mathbb{R}^d$, $d = 1,2,3$, and $\beta \in \mathbb{R}$ indicate repulsive $(\beta>0)$ and attractive $(\beta <0)$ interactions. The external trapping potential function $V: \Omega \to \mathbb{R} $ is commonly assumed to be harmonic $ V(x) = \frac{1}{2}\left( \gamma_1x^2_1 + \dots \gamma_2 x^2_d\right), \gamma_1, \dots ,\gamma_d >0 $, among many other choices used in experimental studies such as the optical lattice potential \cite{denschlag2002abose}. The conservation of mass and energy associated with equation (\ref{eq:nls}) is given by
\begin{equation}
\int_{\Omega} |\psi(x,t)|^2 dx = 1, \quad t > 0 \label{ulc}
\end{equation}
and
\begin{equation}
\mathcal{E}[\psi](t) = \mathcal{E}[\psi](0),  \quad t > 0  
\end{equation}
respectively, where the variational energy is given by  
$$
\mathcal{E}[\psi](t) = \int_\Omega \left[\frac{1}{2}\left|\nabla \psi(x,t) \right|^2 + V(x)|\psi(x,t)|^2 + \frac{\beta}{2} |\psi(x,t)|^4 dx \right].
$$

A stationary solution of equation (\ref{eq:nls}) can be obtained using the ansatz $\psi(x,t) = e^{-i\mu t} u(x)$ where $\mu \in \mathbb{R}$ is the chemical potential of the condensate and $ u:\Omega\to \mathbb{R}$. One can arrive at the nonlinear Gross-Pitaevskii eigenvalue problem (GPE)
\begin{eqnarray}
    \mu u(x) &=& \frac{1}{2}\frac{\delta \mathcal{E}[u]}{\delta u} = -\frac{1}{2} \Delta u(x) + V(x) u(x) + \beta |u(x)|^2u(x), \quad x \in \Omega
    \label{eq:gpe}
\end{eqnarray}
subject to the homogeneous Dirichlet boundary condition and the unity mass constraint (\ref{ulc}). Of particular importance is finding a ground state eigenpair $(\mu,u)$, where $u$ is given by
$$
u = \text{argmin}_{\|\phi\|_{L^2(\Omega)} = 1} \mathcal{E}[\phi].
$$ For simplicity, we assume throughout the rest of the paper that the condensate is defocusing, i.e. $\beta \geq 0$, and $V(x) \geq 0, x\in \Omega$. It can be shown \cite{cances2010numerical} that the smallest eigenvalue $\mu_g$ is associated with exactly two eigenfunctions $\pm u_g$, where $u_g(x) >0$, $x\in \Omega$ is the unique ground state of $\mathcal{E}$;  the ground state energy is given by $\mu_g - \frac{\beta}{2} \|u_g\|^4_{L^4(\Omega)}$. There are infinitely many other eigenpairs, commonly called excited states in the physics literature, corresponding to some ordering of larger values of the energy \cite{henning2020sobolev}.  

A number of numerical methods for the ground states of the BEC have been studied and continue to motivate the emergence of novel approaches such as the data-driven method \cite{bao2025computing}. One class of existing numerical approaches is based on direct minimization of the energy functional $\mathcal{E}$ by applying, for instance, Newton-type methods \cite{bao2003ground,caliari2009minimisation,wu2017regularized}. In lieu of the optimization framework, alternative numerical methods focus on solving the GPE, the Euler-Lagrange equation of $\mathcal{E}$, in terms of nonlinear eigenvalue problems \cite{cances2014perturbation,jarlebring2014inverse,adhikari2000numerical,edwards1995numerical,chang2007computing}. Another class of numerical methods is predicated on the time-dependent reformulation of the GPE into gradient flows, as opposed to the time-independent perspectives of directly minimizing the energy or directly solving the GPE. One of the representative works is the imaginary-time method \cite{chiofalo2000ground} widely used in the physics literature and mathematically justified as equivalent to the discrete normalized $L_2$-gradient flow \cite{bao2004computing}, which is given by
\begin{align}
\label{dngf}
\partial_t u(x,t)  &= -\frac{1}{2} \frac{\delta \mathcal{E}[u]}{\delta u}, \quad x  \in \Omega , \quad t_n \leq  t  < t_{n+1} \\
u(x,t_{n+1}) &= \frac{u(x,t^{-}_{n+1})}{\|u(x,t^{-}_{n+1}\|_{L^2(\Omega)}},
\end{align}
subject to the homogeneous Dirichlet boundary condition, and given some initial condition $u_0$ with $\|u_0\|_{L^2(\Omega)} = 1$. A more generalized framework of Sobolev gradient flows has been employed in several studies \cite{henning2020sobolev,garcia2001optimizing,danaila2010new}, leading to the development of novel numerical methods, such as the hybrid approach based on Riemannian optimization \cite{danaila2017computation}. We refer to \cite{henning2025gross} and references cited therein for a more comprehensive overview of numerical methods for the GPE.

In this work, we compute the ground states by applying a backward Euler-type time stepping and finite difference spatial discretization (BEFD) to the gradient flow (\ref{dngf}) as in the pioneering work \cite{bao2004computing}. A wide variety of other discretization techniques have been studied \cite{antoine2018asymptotic,yang2025energy,ming2014efficient,aftalion2004giant,gammal1999improved}, but we adopt the BEFD which still remains one of the most popular choices in practical computations due to its simplicity, efficiency, and robustness \cite{bao2012mathematical}. In order to contextualize our contributions, we stress that our BEFD method is designed for \emph{bounded} domains with curved boundaries. Most, if not all, experimental investigations are conducted in spatial domains of finite sizes, but it is customary in numerical studies to take a large enough rectangular computational domain assuming that the solutions may decay sufficiently fast away from the experimental domain boundary. Another common practice is to take relatively simple domains such as radial or polyhedral domains, should the boundary effect be taken into account as in the case of strong nonlinear self-interaction. Our proposed BEFD scheme for more complex domains is also timely in light of the growing interest in multifaceted interplay between BECs and geometries.

The main contribution of this paper is to combine the level set method \cite{osher1988fronts} and the original BEFD method to propose a new numerical method that can automatically handle curved boundaries. The level set method is a mainstay computational framework for treating complex interfaces, as evidenced by the extensive literature (see \cite{gibou2018review}, for example) on its considerable success and advances. Our work, to the best of our knowledge, is among the first lines of studies that draw on the flexibility of the level set formulation in the context of numerical simulations of BEC, further expanding a wide range of applications for the former. As a pioneering work, we focus on the simplest setting of non-rotating, one-component GPE equation (\ref{eq:gpe}) in static domains. However, given that the level set formulation was originally devised for the motion of an interface, our approach may be extended to more complicated settings of physical importance such as the BECs in dynamic domains suggested in \cite{van2025geometrically}.

Our BEFD method employs a Cartesian grid to preserve the simplicity of the original formulation, thereby circumventing the computational expense associated with mesh generation. To handle curved boundaries, we systematically adjust the finite difference approximations near the interface using a PDE-based extension technique \cite{fedkiw1999non}. This is equivalent to automatic interpolation of ghost point values located in the outer vicinity of the domain, without invoking case-by-case analysis of local boundary geometry. The novelty of our work is to extend each nodal basis function centered in the inner vicinity of the domain across the boundary, hence effectively obtain an extension \emph{operator} from the interior boundary layer data to the exterior ghost layer data. Our approach differs from existing methods, such as that in \cite{lee2023ghost}, where the PDE-based extension technique is applied to the known interior solution at the current time level to compute the corresponding ghost point values. In contrast, our focus on the underlying extension operator is driven by the semi-implicit time-stepping scheme of the original BEFD, which requires ghost point values at the next (unknown) time level. The overall efficiency of our BEFD method remains uncompromised, as the underlying extension operator—and consequently the modified finite difference formula—is computed only once and reused throughout.

The remainder of the paper is organized as follows. We describe our proposed method in detail in Section \ref{sec:nm} and present numerical results in \ref{sec:ne}. We provide concluding remarks in Section \ref{sec:conc}.


\section{Numerical Method} \label{sec:nm}
Let us consider the $2D$ setting with a uniform rectangular grid $h \mathbb{Z}^2$, where $h$ is the spatial grid size. Given a grid point $(x_i,y_j) \in \Omega$, we call it a regular point if all its four neighbors $(x_{i+1},y_{j}), (x_{i-1},y_{j}), (x_{i},y_{j+1}),(x_{i},y_{j-1})$ belong to $\Omega$; otherwise, it is called an irregular point. We define the first ghost layer as the set of grid points $(x_i,y_j) \in \Omega^c$, of which at least one of the four neighbors is an irregular point. The second ghost layer is defined as the set of grid points in $\Omega^c$ that do not belong to the first ghost layer themselves, but at least one of their four neighbors does. \textcolor{black}{We define the two layers of ghost points as they may be necessary for the finite difference approximation of the Laplacian centered at irregular and (some of) regular points}. We use $t_n = n \Delta t$ to represent the discrete time levels, and we write $u^n_{i,j}$ to represent $u(x_i,y_j,t_n)$. We delineate the various components of our numerical method by starting with the BEFD discretization of (\ref{dngf}).

\subsection{The BEFD method}
\label{sec:ndgf}
We consider the following fully discrete normalized gradient flow, 
\begin{align}
\frac{\tilde{u}_{i,j}^{n+1}-u_{i,j}^{n}}{\tilde\Delta t} &= \frac{1}{2}\mathbf{D}_h{\tilde{u}^{n+1}_{i,j}} - V_{i,j}\tilde{u}^{n+1}_{i,j} - \beta |u^n_{i,j}|^2 \tilde{u}^{n+1}_{i,j} \\
u^{n+1} &= \frac{\tilde{u}^{n+1}}{\|\tilde{u}^{n+1}\|_{2,h}}
\end{align}
Here, $\|\cdot \|_{2,h}$ denotes the numerical approximation of $\| \cdot \|_{L^2(\Omega)}$ and $\textbf{D}$ is the approximation of the Laplacian $\Delta$ to be presented shortly. Bao and Du \cite{bao2004computing} proved that the BEFD is energy decreasing and monotone for any $\Delta t > 0$ in a rectangular domain, provided that $\beta = 0$ and the standard second-order approximation of the Laplacian is used. 

In cases where $\beta \gg 0 $, we rescale the solution by $z = {\sqrt{\beta}}u$ and solve an equivalent equation
\begin{align}
\label{eq:sgp}
\frac{\tilde{z}_{i,j}^{n+1}-z_{i,j}^{n}}{\Delta t} &= \frac{1}{2}\mathbf{D}_h{\tilde{z}^{n+1}_{i,j}} - V_{i,j}\tilde{z}^{n+1}_{i,j} -  |z^n_{i,j}|^2 \tilde{z}^{n+1}_{i,j} \\
z^{n+1} &= \sqrt{\beta} \frac{\tilde{z}^{n+1}}{\|\tilde{z}^{n+1}\|_{2,h}}.
\end{align}
In \cite{bao2007energy},  a different rescaling is used to derive asymptotic approximations of the energy and chemical potential of the BEC ground states in the semi-classical regime. Their analysis is concerned with the GP equation in the whole space $\mathbb{R}^d$ subject to perturbed harmonic potentials. We also highlight alternative approaches where Fourier-based time-splitting methods have been used in lieu of rescaling \cite{bao2003numerical}. One may also circumvent rescaling the equation in the framework of finite difference methods by adapting a local refinement strategy such as the Shishkin grid \cite{shishkin1990grid} to better resolve the semi-classical asymptotic regime.

\textcolor{black}{
We provide the initial data for the gradient flow by following the approach in \cite{bao2003ground}. In the case of small $\beta$, we use the ground state of the linear Schrodinger problem which can be obtained efficiently. In the case of large $\beta$, we consider the Thomas-Fermi approximation where the Laplacian term is ignored to solve for $u$ in the resulting algebraic equation. For intermediate values of $\beta$,  we apply a continuation technique \cite{allgower1980simplicial} to successively compute the ground states starting with the linear problem. Our current work focuses on the ground states, but we note that the continuation approach may also be useful for finding excited states of the BEC for any $\beta$. }

\subsection{Signed distance function}
\label{sec:sd}
We assume the signed distance function $\phi$ is given locally near $\partial \Omega$, i.e.
$$
|\nabla \phi | = 1
$$ 
with $\phi < 0$ in $\Omega$, and $\phi \ge 0 $ otherwise. For simple geometries, analytical expressions can be easily determined and in more general cases, highly accurate approximations of $\phi$ can be computed by well-developed algorithms such as \cite{zhao2005fast,pan2018high} in the vicinity of the interface $\phi = 0$. We can then readily compute  the unit normal $\mathbf{n} $ and the curvature $\mathbf{\kappa}$ 
$$
\mathbf{n}_{i,j} = \frac{\left[{\partial_x \phi_{i,j} }, {\partial_y \phi_{i,j}} \right]^{T}}{\left((\partial_x \phi_{i,j})^2+ (\partial_y \phi_{i,j})^2\right)^{1/2}} \quad \text{ , } \quad  \mathbf{\kappa}_{i,j} = \nabla \cdot \mathbf{n}_{i,j} = \frac{(\partial_{xx} \phi_{i,j})(\partial_{y} \phi_{i,j})^2-2(\partial_{x} \phi_{i,j})(\partial_{y} \phi_{i,j})(\partial_{xy} \phi_{i,j})+(\partial_{yy} \phi_{i,j})(\partial_{x} \phi_{i,j})^2}{\left((\partial_{y} \phi_{i,j})^2+(\partial_{x} \phi_{i,j})^2\right)^{3/2}}
$$
where $T$ denotes the transpose of a vector, and all partial derivatives are approximated by centered differences. One may use an improved, numerical approximation of $\mathbf{n}$ and $\mathbf{\kappa}$ suggested in the work \cite{macklin2006improved} to handle level set singularities and moving interfaces. In this work, we rely on the standard approximation as we deal with (piecewise) smooth, stationary domains. Accordingly, we compute the signed distance functions only once for a given spatial resolution, and such one-time computation is not too costly as they need to be calculated only near $\partial \Omega$.

\subsection{Ghost-point based finite difference approximation}
\label{sec:fd}
At every grid point $(x_i,y_j)\in \Omega$, we apply the standard 4th-order finite difference 
\begin{equation}
\label{stencil:fd}
	D_{h} u_{i,j} = \frac{-\frac{1}{12}u_{i-2,j}+\frac{4}{3}u_{i-1,j}-\frac{5}{2}u_{i,j}+\frac{4}{3}u_{i+1,j}-\frac{1}{12}u_{i+2,j}}{h^2}  + \frac{-\frac{1}{12}u_{i,j-2}+\frac{4}{3}u_{i,j-1}-\frac{5}{2}u_{i,j}+\frac{4}{3}u_{i,j+1}-\frac{1}{12}u_{i,j+2}}{h^2}
\end{equation}
to approximate $\Delta u$. All irregular and some regular points require sufficiently accurate ghost points in the first two ghost layers, for which we extrapolate irregular grid point values in the normal directions as follows. \textcolor{black}{Assume for now that $u$ satisfies the stationary nonlinear eigenvalue problem (\ref{eq:gpe}) with the homogeneous Dirichlet boundary condition}. Let $\mathbf{x}_g$ be a grid point in the ghost layer and apply the Taylor series expansion along the normal 
\begin{align}    
	u(\mathbf{x}_g) &= u(\mathbf{x}_g^{\star}) + \phi(\mathbf{x}_g)\partial_{\mathbf{n}(\mathbf{x}^{\star}_g)} u (\mathbf{x}_g^{\star}) + \frac{(\phi(\mathbf{x}_g))^2}{2}\partial_{\mathbf{n}(\mathbf{x}^{\star}_g)\mathbf{n}(\mathbf{x}^{\star}_g)} u (\mathbf{x}_g^{\star}) + O(h^3)  \\
	& = \phi(\mathbf{x}_g)\partial_{\mathbf{n}(\mathbf{x}^{\star}_g)} u (\mathbf{x}_g^{\star}) + \frac{(\phi(\mathbf{x}_g))^2}{2}\partial_{\mathbf{n}(\mathbf{x}^{\star}_g)\mathbf{n}(\mathbf{x}^{\star}_g)} u (\mathbf{x}_g^{\star}) + O(h^3)  
  \label{reconstr-Taylor}  
\end{align}
where $\mathbf{n}(\mathbf{x}^{\star}_g)$ is the local unit normal vector,  $\mathbf{x}^{\star}_g$ is the intersection point of the local normal line passing $\mathbf{x}_g$ with $\partial \Omega$ (the projection of $\mathbf{x}_g$ onto $\partial \Omega$).  In order to approximate the directional derivatives at $\partial\Omega$, we turn to the interior of $\Omega$ and apply a similar Taylor expansion  for an irregular point $\mathbf{x}$ to obtain
\[
\partial_{\mathbf{n}(\mathbf{x}^{\star})} u (\mathbf{x}^{\star}) =  \frac{u(\mathbf{x})}{\phi(\mathbf{x})} - \frac{\phi(\mathbf{x})}{2} \partial_{\mathbf{n}(\mathbf{x}^{\star})\mathbf{n}(\mathbf{x}^{\star})} u (\mathbf{x}^{\star}) + O(h^2)
\] 
where $\mathbf{x}^{\star}$ is the projection of $\mathbf{x}$ onto $\partial \Omega$.  We next approximate the second order normal derivatives by observing that 
$$\Delta u(\mathbf{x}^{\star}) = -2\lambda u(\mathbf{x}^\star)+2V(\mathbf{x}^{\star})u(\mathbf{x}^{\star})+2\beta|u(\mathbf{x}^\star)|^2u(\mathbf{x}^\star) = 0$$
 due to the homogeneous Dirichlet boundary condition. Using the Laplacian in polar coordinates
 $$\Delta u(\mathbf{x}^\star) =[\partial^2_{rr} u+\frac1r \partial_r u +\frac1{r^2} \partial^2_{\theta\theta} u ](\mathbf{x}^\star)~,
 $$
 where the origin of the polar coordinate system is set at the local center of curvature of the boundary, we have \cite{russell2003cartesian}
$$
\Delta u(\mathbf{x}^\star) = \partial_{\mathbf{n}(\mathbf{x}^{\star})\mathbf{n}(\mathbf{x}^{\star})} u (\mathbf{x}^{\star}) + \kappa(\mathbf{x}^\star)  \partial_{\mathbf{n}(\mathbf{x}^{\star})}u (\mathbf{x}^{\star}) 
$$
again due to the homogeneous Dirichlet boundary condition. Hence it follows that 
 \begin{equation}
  \partial_{\mathbf{n}(\mathbf{x}^{\star})}u (\mathbf{x}^{\star})   = \frac{1}{\phi(\mathbf{x})\left(1-\frac{1}{2}\kappa(\mathbf{x}^\star)\phi(\mathbf{x})\right)} {u(\mathbf{x})}  + O(h^2)~,
  \label{eq:grad}
\end{equation}
and consequently
\begin{equation}
\partial_{\mathbf{n}(\mathbf{x}^{\star})\mathbf{n}(\mathbf{x}^{\star})} u (\mathbf{x}^{\star}) = -\frac{{\kappa(\mathbf{x}^\star)}}{{\phi(\mathbf{x})\left(1-\frac{1}{2}\kappa(\mathbf{x}^\star)\phi(\mathbf{x})\right)} {} }u(\mathbf{x})+ O(h^2)~.
\label{eq:second_grad}
\end{equation}
Assuming a sufficiently smooth $\phi$, one can furnish $\kappa(\mathbf{x}^\star) = \kappa(\mathbf{x})- \phi(\mathbf{x})\nabla \kappa(\mathbf{x})\cdot \mathbf{n}(\mathbf{x}) + O(h^2)$, completing the approximation of normal derivatives at the boundary. As will be shown in the sequel, we will modify ${D}_h$ to take into account the effects of the {\color{black} ghost points that are completely determined by irregular grid points}.

Let us now turn to the case where the gradient flow is non-static, hence $\Delta u$ no longer vanishes at the boundary. A locally third-order extrapolation may still be constructed, but it is no longer needed since spatial resolution can be improved to higher accuracy \emph{once} we reach a steady state. Instead, we consider a more crude, yet simpler interpolation
 \begin{equation}
  \partial_{\mathbf{n}(\mathbf{x}^{\star})}u (\mathbf{x}^{\star})   = \frac{1}{\phi(\mathbf{x})} {u(\mathbf{x})}  + O(h)~,
  \label{eq:grad-2}
\end{equation} where $\mathbf{x}^\star$ denotes the projected boundary point as above. So far as the approximation of $\Delta$ is concerned, we resort to the standard second-order approximation 
\begin{equation*}
	\widehat{D}_{h} u_{i,j} = \frac{u_{i-1,j}-2u_{i,j}+u_{i+1,j}}{h^2}  + \frac{u_{i,j-1}-2u_{i,j}+u_{i,j+1}}{h^2}
\end{equation*}
as in the original BEFD. The resulting truncation error is $O(1)$ near the boundary, but we recall the result that linear extrapolation is sufficient to yield second order accurate solutions for linear Poisson problems \cite{gibou2013high}. In the current nonlinear setting, the semi-implicit treatment of the nonlinearity can only enhance the monotonicty of $\widehat{D}$, hence we expect no hindrance to the convergence. We set $\mathbf{D}_h$ to $\widehat{D}_h$ until an approximate steady state is reached, for instance,
$
 \max_{i,j} \frac{ |u_{i,j}^{n+1}-u_{i,j}^{n}|}{dt} < 10^{-8} 
$
to obtain an approximate ground state, which then is used to simulate the second phase of the gradient flow with $\mathbf{D}_h$ now chaged to ${D}_h$.

Since the Cartesian grid points do not conform with $\partial \Omega$, it is not true in general that for a fixed ghost point $\mathbf{x}_g$, there exists a corresponding irregular point $\mathbf{x}$ for which $\mathbf{x}^\star = \mathbf{x}^\star_g$. This issue can be addressed by solving an artificial transport equation to extend the necessary derivatives across the boundary along the normal directions \cite{lee2023ghost}. With our choice of the backward Euler, however, there arises the issue that numerical solutions to extended are unknown solutions at the next time step.

\subsection{PDE-based extension of nodal basis function}
\label{sec:enbf}
Given a function $u$ in $\Omega$, one can constantly extend it across $\partial \Omega$ \cite{fedkiw1999non} by solving 
\begin{equation}
\label{eq: extension}
\partial_t u + \mathbf{n}\cdot \nabla u = 0
\end{equation}
in $\Omega^c$, where $\mathbf{n} = {\nabla \phi}/|{\nabla \phi}|$ is the outward normal vector defined in Section \ref{sec:sd}. One can apply a first-order monotone scheme such as the first order upwinding method (which we use in this work) until a steady state is reached in the two ghost layers. For clarity of presentation, we assume for now that we seek such constant extension in lieu of more accurately extrapolated ghost point values. It is not the case that we can directly apply the constant extension technique with our choice of backward Euler time stepping, since the values of $u$ in $\Omega$ are unknown. However, it is evident that ghost values under the constant extension equation (\ref{eq: extension}) are linear combinations of the values at irregular grid points, hence we can write
\begin{equation*}
    V_{ghost}=A_h V_{irregular}~,
\end{equation*}
where $V_{ghost}$ is a column vector containing all ghost point values under the constant extension,  $V_{irregular}$ is a column vector of values at irregular grid points, and $A_h$ is an extension matrix to be determined. \\
\indent In order to compute the $k$-th column of $A_h$, we consider a nodal basis grid function $\eta_{k,h}$ given by
\begin{equation*}
\eta_{k,h} (i',j')  = 
\begin{cases}
1, \quad \text{if } i' = i, j' = j \\
0 \quad \text{ otherwise} 
\end{cases} 
\end{equation*}
where the index $(i,j)$ denotes the irregular grid point corresponding to the $k$-th entry of $U_{irregular}$. We then constantly extend $\eta_{k}$ and the resulting ghost values are then stored as the entries of $k$-th column of $A_h$ in the same ordering as the entries of $U_{ghost}$. We note that the columns of $A_h$ can be computed independently of one another and there are $O(1/h)$ number of basis functions to be extended. \textcolor{black}{Moreover, it takes $O(1)$ number of iterations to reach steady states in the two ghost layers of width $O(h)$. }The rows of the sparse matrix $A_h$ represent the interpolation weights that are automatically obtained without detailed examination of how then boundary curves intersects each grid cell. \\
\indent Now that the extension matrix $A_h$ is computed, we can use it to extend the first and second order local normal derivatives to the ghost layers with second order accuracy, without actually solving equation (\ref{eq: extension}). Combining formulas (\ref{reconstr-Taylor}), (\ref{eq:grad}) and (\ref{eq:second_grad}), we have at every ghost point
\begin{equation}
u(\mathbf{x}_g) = \phi(\mathbf{x}_g)\partial^h_{\mathbf{n}(\mathbf{x}^{\star}_g)} u (\mathbf{x}_g^{\star}) + \frac{(\phi(\mathbf{x}_g))^2}{2}\partial^h_{\mathbf{n}(\mathbf{x}^{\star}_g)\mathbf{n}(\mathbf{x}^{\star}_g)} u (\mathbf{x}_g^{\star}) + O(h^3)~,
\end{equation}
where 
$$
\partial^h_{\mathbf{n}(\mathbf{x}^{\star}_g)} u (\mathbf{x}_g^{\star})
$$
is the extended value of formula (\ref{eq:grad})
$$
\frac{1}{\phi(\mathbf{x})\left(1-\frac{1}{2}\kappa(\mathbf{x}^\star)\phi(\mathbf{x})\right)} {u(\mathbf{x})}
$$
stored at corresponding irregular grid points and
$$
\partial^h_{\mathbf{n}(\mathbf{x}^{\star}_g)\mathbf{n}(\mathbf{x}^{\star}_g)} u (\mathbf{x}_g^{\star})
$$
is the extended value of formula (\ref{eq:second_grad})
$$
-\frac{{\kappa(\mathbf{x}^\star)}}{{\phi(\mathbf{x})\left(1-\frac{1}{2}\kappa(\mathbf{x}^\star)\phi(\mathbf{x})\right)} {} }u(\mathbf{x})
$$
stored at corresponding irregular grid points. The above extensions of the first and second order normal derivatives are performed virtually through the extension matrix $A_h$.
Locally second order extrapolation via (\ref{eq:grad-2}) follows analogously. In light of the equations (\ref{eq:grad}) and (\ref{eq:second_grad}), we define the diagonal matrices
$$
C_h = \text{diag}(c_{k}), \qquad G_h = \text{diag}(g_{k}), \qquad \Phi_h = \text{diag}(\phi_{l})
$$
where $c_{k} =c_{i.j}= \kappa_{i,j} - \phi_{i,j} (\nabla \kappa)_{i,j} (\mathbf{n})_{i,j}$,
$g_k = ({\phi_{i,j}-\frac{1}{2}c_{i,j}(\phi_{i,j})^2})^{-1}$, and $\phi_l = \phi_{\hat{i},\hat{j}}$. Here, $(\nabla \kappa)_{i,j}$ is computed by the centered difference approximation, while $(i,j)$ and $(\hat{i},\hat{j})$ correspond to $k$-th and $l$-th entries of $U_{irregular}$ and $U_{ghost}$, respectively. We then arrive at the third-order extension mapping
\begin{equation}
    U_{ghost} = {\Phi_h\left(A_h-\frac{1}{2} \Phi_hA_hC_h\right) G_h} U_{irregular}, 
\end{equation}
where $U_{ghost}$ denotes a column vector containing all ghost point values, while $U_{irregular}$  represents a column vector of values at irregular grid points, arranged in the same order as described above. 

\textcolor{black}{The third order accuracy of our extension mapping rests on the assumption of sufficiently smooth boundary. Near sharp corners, one may impose a cutoff on the magnitudes of the local curvatures so that they are not larger than $O(\frac{1}{h})$  in light of our standard curvature computation in Section \ref{sec:sd}  on a grid of $O(h)$ resolution. However, our numerical tests are performed without limiting the curvature values, yet we observe no numerical artifacts, suggesting the robustness of our method. On a related note, we observe empirically that the diagonal dominance and sparsity of $D_h$ in (\ref{stencil:fd}) remains unchanged after it is modified by the extension mapping, which itself can be readily obtained thanks to its constitutive diagonal matrices. Let us also remark that should the resulting modification of $D_h$ become ill-conditioned (due to some irregular grid points being too close to the boundary), one may also apply the modified incomplete LU preconditioner \cite{gustafsson1978class} which is known to be the optimal choice for the Dirichlet boundary condition.}

 Our extrapolation approach differs from the existing one \cite{aslam2004partial} by decoupling the extension of the derviative information from the assembly of ghost point values. We solve the homogeneous PDE (\ref{eq: extension}) only once, instead of solving one homogeneous and two inhomogeneous equations (with different source terms). Our approach relies only on the normal derivatives at the boundary that are deliberately computed and stored at irregular grid points, in lieu of the normal derivatives \textcolor{black}{at both irregular and ghost grid points that are calculated and stored at those same grid points that they rightfully belong}. The simplicity of our approach is consistent with that of the original BEFD. However, a drawback of our nodal basis function extension is its dependence on a linear monotone scheme to compute the constant extension mapping, which in turn limits our ghost point extrapolation to be locally third-order accurate. \textcolor{black}{We leave it to future work to extend our approach to a higher-order scheme at the expense of greater computational complexity}.

\subsection{Level-set based numerical quadrature}
In order to enforce the $L^2$-normalization in the gradient flow, we need a sufficiently accurate numerical quadrature method that can handle the level set parametrization of the boundary curve.  Since quadratic extrapolation is shown to be third-order accurate in the linear Dirichlet-Laplace eigenvalue problems \cite{gibou2013high}, we seek a third-order accurate numerical quadrature for the $L^2$-norm by resorting to the cell-based quadrature method by Min and Gibou \cite{min2007geometric}. In their method, they construct a piecewise linear approximation of the boundary, from which the interface is divided into a disjoint union of simplices. They then apply the standard quadratures, namely the Grundmann-Moeller quadrature \cite{grundmann1978invariant} over the simplices,  and the trapezoidal rule over the interior rectangular cells (those that do not intersect the approximate boundary). Their method is second-order accurate, efficient, and robust against perturbation of the interface location.  We show that their geometric integration method is third-order accurate when it is applied to the square of a function which vanishes on the boundary.

\begin{lem}
    Suppose $u(x)$ is a smooth function on a smooth $\Omega$ with $u\vert_{\partial_\Omega} = 0$. Then, the Min and Gibou approximation of $\|u\|^2_{L^2(\Omega)}$ is third-order accurate.  
\end{lem}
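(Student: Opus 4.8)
The plan is to compare the Min--Gibou quadrature $Q_h[\cdot]$ --- composite trapezoidal rule on the grid cells contained in the polygonal reconstructed domain $\Omega_h$ and not meeting the reconstructed interface $\partial\Omega_h$, plus Grundmann--Moeller cubature on the simplices triangulating the remaining boundary cells --- with $\|u\|_{L^2(\Omega)}^2$, and to show that the two usual sources of the $O(h^2)$ error (the piecewise-linear interface and the interior trapezoidal rule) both improve to $O(h^3)$ precisely because $g:=u^2$ vanishes to \emph{second} order on $\partial\Omega$.

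First I would record the extra regularity. Extending $u$ to $C_c^\infty(\mathbb R^2)$ we have $g\in C_c^\infty$; since $u|_{\partial\Omega}=0$, also $g=0$ and $\nabla g=2u\nabla u=0$ on $\partial\Omega$, so $|g(\mathbf x)|\le C\operatorname{dist}(\mathbf x,\partial\Omega)^2$ near $\partial\Omega$, and the zero-extension $\tilde g:=g\,\mathbf 1_\Omega$ is compactly supported, lies in $C^{1,1}(\mathbb R^2)$, is $C^\infty$ on $\overline\Omega$ and on $\mathbb R^2\setminus\overline\Omega$, and its second derivatives jump across the smooth curve $\partial\Omega$; hence its distributional third derivatives are the sum of an $L^1$ function and a finite measure carried by $\partial\Omega$.

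Next I would dispose of the boundary-cell and interface contributions. As $\phi$ is the signed distance function, its piecewise-linear interpolant differs from it by $O(h^2)$ while $|\nabla\phi|=1$, so $\partial\Omega_h$ lies in an $O(h^2)$-tube about $\partial\Omega$; each of the $O(1/h)$ boundary cells therefore lies within $O(h)$ of $\partial\Omega$, on which $|g|=O(h^2)$, so both the exact integral of $g$ and its Grundmann--Moeller value over such a cell are $O(h^2)\cdot O(h^2)=O(h^4)$ (and similarly $|\Omega\triangle\Omega_h|=O(h^2)$ carries $|g|=O(h^4)$, a negligible $O(h^6)$). Since the interior-cell parts are common to $Q_h[g]$ and to the full-grid trapezoidal sum $S_h:=h^2\sum_{\mathbf p\in h\mathbb Z^2}\tilde g(\mathbf p)$ (whose exterior nodes contribute $0$), it follows that $Q_h[g]=S_h+O(h^3)$, with $S_h$ now approximating $\int_{\mathbb R^2}\tilde g=\|u\|_{L^2(\Omega)}^2$. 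By Poisson summation, $S_h-\int_{\mathbb R^2}\tilde g=\sum_{\mathbf k\neq 0}\widehat{\tilde g}(2\pi\mathbf k/h)$; from the regularity above one has the conormal decay $|\widehat{\tilde g}(\xi)|=O(|\xi|^{-3})$, and $\sum_{\mathbf k\in\mathbb Z^2\setminus\{0\}}|\mathbf k|^{-3}<\infty$, so $S_h=\|u\|_{L^2(\Omega)}^2+O(h^3)$ and therefore $Q_h[u^2]=\|u\|_{L^2(\Omega)}^2+O(h^3)$.

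The main obstacle is this last step, beating the generic $O(h^2)$ trapezoidal error. Heuristically, in an Euler--Maclaurin expansion over the rectilinear interior region $\Omega'$ the $O(h^2)$ term is $\tfrac{h^2}{12}\int_{\Omega'}\Delta g=\tfrac{h^2}{12}\int_{\partial\Omega'}\partial_n g$, which one wants to kill because $\partial_n g=2u\,\partial_n u=0$ on $\partial\Omega$; the subtlety is that $\partial\Omega'$ is a jagged grid-aligned curve at distance $O(h)$ --- not $0$ --- from $\partial\Omega$, so this boundary term is only $O(h)$, still leaving $h^2\cdot O(h)=O(h^3)$. Passing to the $C^{1,1}$ zero-extension on the full grid, as above, is the clean way to make this rigorous, and then the one genuinely technical ingredient is the decay bound $|\widehat{\tilde g}(\xi)|=O(|\xi|^{-3})$, which rests on $\tilde g$ being $C^{1,1}$ with singular support confined to the smooth, finite-length curve $\partial\Omega$ (a van der Corput estimate would improve this to $O(|\xi|^{-7/2})$, hence error $O(h^{7/2})$, but $O(h^3)$ is all that is claimed).
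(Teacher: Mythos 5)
Your argument is correct in substance, but it takes a genuinely different route from the paper for the decisive interior estimate. The paper stays entirely local and elementary: on each interior cell it expands the trapezoidal error as $\tfrac{h^4}{12}\Delta(u^2)$ at the cell center, rewrites $\Delta(u^2)=2\bigl(u\Delta u+|\nabla u|^2\bigr)$, converts the cell error into a flux $\tfrac{h^2}{6}\int_{\partial C_{i,j}} u\,\mathbf n\cdot\nabla u\,ds$ by Green's identity, lets the fluxes telescope across interior edges, and kills the surviving flux on the jagged boundary $\partial\square$ of the union of interior cells using $u=O(h)$ there; the boundary cells are dismissed by the built-in $O(h^4)$ per-cell accuracy of the Min--Gibou construction. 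You instead pass to the zero-extension $\tilde g=u^2\mathbf 1_\Omega$, observe that $u|_{\partial\Omega}=0$ makes $\tilde g\in C^{1,1}$ with singular support on the smooth curve $\partial\Omega$ (no surface delta in $D^2\tilde g$ since $g$ and $\nabla g$ vanish there), and then obtain the interior estimate globally from Poisson summation together with the conormal decay $|\widehat{\tilde g}(\xi)|=O(|\xi|^{-3})$, the lattice sum $\sum_{\mathbf k\neq 0}|\mathbf k|^{-3}$ being convergent in $2$D. That decay bound is indeed available (two integrations by parts in the direction $\xi/|\xi|$, then a divergence-theorem bound $O(|\xi|^{-1})$ for the Fourier transform of the piecewise-smooth, jump-discontinuous second derivative), so your mechanism is sound, and it even yields $O(h^{7/2})$ under a nonvanishing-curvature hypothesis, which the paper's telescoping argument does not see. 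What the paper's route buys is locality and minimal machinery; what yours buys is a sharper and more structural explanation of why vanishing boundary data upgrades the trapezoidal rule. One step you should tighten: the identification $Q_h[g]=S_h+O(h^3)$ is not just a matter of ``exterior nodes contribute $0$'' --- nodes of $\Omega$ lying within $O(h)$ of $\partial\Omega$ carry full weight $h^2$ in $S_h$ but only fractional (or zero) weight in the composite interior-cell trapezoidal sum, so you must add the count: there are $O(1/h)$ such nodes, each with weight discrepancy at most $h^2$ and $|\tilde g|=O(h^2)$ there, giving the required $O(h^3)$; this uses exactly the $|g|\lesssim\operatorname{dist}(\cdot,\partial\Omega)^2$ bound you already established, so it is an omission of bookkeeping rather than of an idea.
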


\begin{proof}
    Let us write $C_{i,j} =  [x_i, x_{i+1}] \times  [y_j, y_{j+1}]$ and first consider the case $(i,j) \in B_h$, where $B_h = \{ (i,j) \in \mathbb{Z}^2: C_{i,j} \cap \partial \Omega  \neq \emptyset \}$.  Then, the Min and Gibou approximation of 
    $
    \int_{C_{i,j}\cap \partial \Omega} u^2 dx
    $ yields the local quadrature error $E_{i,j} = O(h^4)$  by construction, hence it is immediate to obtain
    $$
\sum_{ (i,j) \in B_h } {E_{i,j}} = O(h^3).    
    $$
    We now assume $({i,j}) \in I_h$ where $I_h = \{(i,j)\in \mathbb{Z}^2: C_{i,j} \subset \Omega \}$.  Then, simple calculation shows that 
    $$
        \int_{C_{i,j}} u^2 dx = \frac{h^2}{4}(u^2(i,j)+ u^2(i+1,j) + u^2(i,j+1) + u^2(i+1,j+1)) + E_{i,j}
    $$
    where the quadrature error $E_{i,j}$ is given by
    \begin{align*}
E_{i,j} = \frac{h^4}{12} \Delta (u^2) \bigg \vert_{(i^\star,j^\star)} & = \frac{h^4}{6}\left( u \Delta u + \nabla u \cdot \nabla u) \right)\bigg\vert_{(i^\star,j^\star)} = \frac{h^2}{6} \int_{C_{i,j}} \left( u \Delta u + \nabla u \cdot \nabla u \right) dx + O(h^6) = \frac{h^2}{6} \int_{\partial C_{i,j}} \left( u \mathbf{n} \cdot \nabla u  \right) dx + O(h^6). 
    \end{align*}
Here, $i^\star,j^\star$ denotes the center of cell $C_{i,j}$, and the last equality is obtained by the Green's identity. Since the fluxes across $\partial C_{i,j}$ cancel out when summing over all interior cells, except near $\partial \Omega$, we arrive at
$$ \sum_{(i,j) \in I_h} {E_{i,j}} = O(h^3) + \frac{h^2}{6} \sum_{(i,j) \in I_h} \int_{\partial C_{i,j}} (u \mathbf{n}\cdot\nabla u) dx  =  O(h^3) + \frac{h^2}{6}\int_{\partial \square  } (u \mathbf{n}\cdot\nabla u) dx =  O(h^3)
$$
where $\square = \bigcup_{(i,j) \in I_h} C_{i,j}$ and we have used $u = O(h)$ along $\partial \square$ due to the homogeneous Dirichlet boundary condition. 
\end{proof}

\textcolor{black}{We remark that the variational energy $\mathcal{E}_g$ can also be calculated to third-order accuracy due to its equivalence to $\mu_g - \frac{\beta}{2} \|u_g\|^4_{L^4(\Omega)}$.  }

\subsection{Applications to higher-order GP equations}
Our proposed BEFD method computes the ground states of the GP equation with the cubic linearity \ref{eq:gpe}, but it can also be readily applied to other variants of the equation that include higher order nonlinear terms. We take as the first example the cubic-quintic GP equation \cite{muryshev2002dynamics}
\begin{equation} \label{eq:cqgpe}
    \mu u(x) = -\frac{1}{2} \Delta u(x) + V(x) u(x) + \beta |u(x)|^2u(x) + \gamma |u(x)|^4u(x)
\end{equation}
suggested as a better model for the BEC in a long thin cylindrical geometry. Here, the coefficients $\beta$ and $\gamma$ denote the strengths of two- and three-particle interactions, respectively. We assume for simplicity that both $\beta$ and $\gamma$ are positive (repulsive). 
We adapt our BEFD algorithm by treating the quintic nonlinearity semi-implicitly as $|u^n_{i,j}|^4\tilde{u}^{n+1}_{i,j} $. There have been studies on particular solutions to the (purely) quintic and cubic-quintic equations, as well as a more systematic investigation in radial domains \cite{luckins2018bose}. Our level set based approach may be utilized for further studies in more general domain shapes. 

As our next example, we consider the modified GP equation \cite{veksler2014simple} 
\begin{equation} \label{eq:hoigpe}
 \mu u(x) = -\frac{1}{2} \Delta u(x) + V(x) u(x) + \beta |u(x)|^2u(x) - \delta \Delta (|u(x)|^2) u(x)
\end{equation}
which accounts for the higher order interaction correction to the binary interaction. Here, $\delta$ is assumed to be non-negative to guarantee the unique positive ground state  \cite{bao2019ground}. We follow the approach proposed in the work \cite{ruan2018normalized} by applying the BEFD with convex-concave splitting of the higher-order interaction term. The splitting scheme is proposed as an alternative to the original BEFD (without the splitting) since the latter suffers from the severe stability issue in the presence of the higher order term. Specifically, we consider the semi-discrete gradient flow
\begin{align*}
\frac{\tilde{u}_{i,j}^{n+1}-u_{i,j}^{n}}{\Delta t} &= \left(\frac{1}{2} + 2\delta |u^n_{i,j}|^2\right) \Delta {\tilde{u}^{n+1}_{i,j}} - V_{i,j}\tilde{u}^{n+1}_{i,j} - \beta |u^n_{i,j}|^2 \tilde{u}^{n+1}_{i,j} + 2\delta |(\nabla u)^n_{i,j}|^2 u^n_{i,j} \\
u^{n+1} &= \frac{\tilde{u}^{n+1}}{\|\tilde{u}^{n+1}\|_2}
\end{align*}
where the gradient $\nabla u$ is approximated by the finite difference operator $\mathbf{G}_h$ which takes the form of
\begin{align}
\widehat{G}_h  u_{i,j}  &=  \left[\frac{u_{i+1,j} - u_{i-,j}}{2h}, \frac{u_{i,j+1} -   u_{i,j-1} }{2h} \right ], \\
{G}_h  u_{i,j}  &=   \left[\frac{u_{i-2,j} -  8 u_{i-1,j} + 8 u_{i+1,j} - u_{i+2,j}}{12h}, \frac{u_{i,j-2} -  8 u_{i,j-1} + 8 u_{i,j+1} - u_{i,j+2}}{12h} \right ]
\end{align}
when $\widehat{D}_h$ and $D_h$ are used, respectively. The fully discrete scheme is 
\begin{align}
\frac{\tilde{u}_{i,j}^{n+1}-u_{i,j}^{n}}{\tilde\Delta t} &= \left(\frac{1}{2} + 2\delta |u^n_{i,j}|^2\right) \mathbf{D}_h {\tilde{u}^{n+1}_{i,j}} - V_{i,j}\tilde{u}^{n+1}_{i,j} - \beta |u^n_{i,j}|^2 \tilde{u}^{n+1}_{i,j} + 2\delta |\mathbf{G}_h u^n_{i,j}|^2 u^n_{i,j} \\
u^{n+1} &= \frac{\tilde{u}^{n+1}}{\|\tilde{u}^{n+1}\|_2}.
\end{align}
We \textcolor{black}{modify $\widehat{G}_h$ and ${G}_h$ in the same fashion as $\widehat{D}_h$ and ${D}_h$ in Section \ref{sec:enbf}}; the centered approximations of the gradient are used to match the corresponding order of approximations for $\mathbf{D}_h$. The original work \cite{ruan2018normalized} shows that the BEFD with the splitting is conditionally stable, yet it allows a much larger time step than the BEFD without any splitting. Another alternative method is proposed in \cite{bao2019computing} which adopts the density function formulation and applies the accelerated projected gradient method to solve a convex minimization problem.

\section{Numerical Experiments} \label{sec:ne}
In this section, we demonstrate the performance of our proposed level-set based BEFD method highlighting its accuracy for various domain shapes and potentials. We use exact signed distance functions and set the time step size $\Delta t = h$ unless otherwise stated. The eigenvalues and eigenfunctions of the linear problems are computed using the built-in command \textsc{Matlab} \textsf{ eigs}.  We estimate the convergence rates by comparison with the reference solutions computed on a very refined grid. 

\subsection{Laplace-Dirichlet eigenvalue on L-shaped domain}
As our preliminary test, we repeat the first numerical test in \cite{heid2021gradient} by solving 
\begin{eqnarray*}
	-\Delta u &= \mu u \quad & \text{ in } \Omega\\
	u  &= 0 \quad & \text{ in } \partial \Omega
\end{eqnarray*}
where  $\Omega = (-1,1)^2 \backslash [0,1]^2$. We compute the smallest eigenvalue $\mu_g$ using a larger computational domain $[-2\pi/5,2\pi/5]$ with mesh sizes $h = \frac{4\pi}{300},\frac{4\pi}{400},\frac{4\pi}{500}, \frac{4\pi}{600}, \frac{4\pi}{700}$ such that the Cartesian grids and the corners of $\Omega$ need not coincide. We compare the computed values with the reference value $ \mu_g \approx 9.639723844021$ provided in \cite{betcke2005reviving}. The singularity of the eigenfunction near the re-entrant corner leads to the expected convergence rate of $O\left(h^{\frac{4}{3}}\right)$, which is consistent with our obtained convergence rate of 1.3269. 

\subsection{Harmonic-Lattice potential in domains with constant curvature}
Our next numerical test considers the fully nonlinear GP equation (\ref{eq:nls}) with the harmonic potential $V(x,y) = (x^2+y^2)/2$ and $\beta = 50$. We use the square domain of the side length $2$ embedded in the computational domain $[-5\pi/6,-5\pi/6]^2$. We choose the mesh sizes $h = \frac{5\pi}{180},\frac{5\pi}{240}, \dots \frac{5\pi}{300},\frac{5\pi}{360}$, and our numerical solutions are compared with the reference value $\mu_g \approx  6.188543396102850$ provided in \cite{engstrom2022higher}. We solve both the original and the rescaled equations (\ref{eq:sgp}), and the results are presented in Figure \ref{fig:sh}.

	\begin{figure}[H]
	\centering
	\subfloat[]{\includegraphics[scale=0.33]{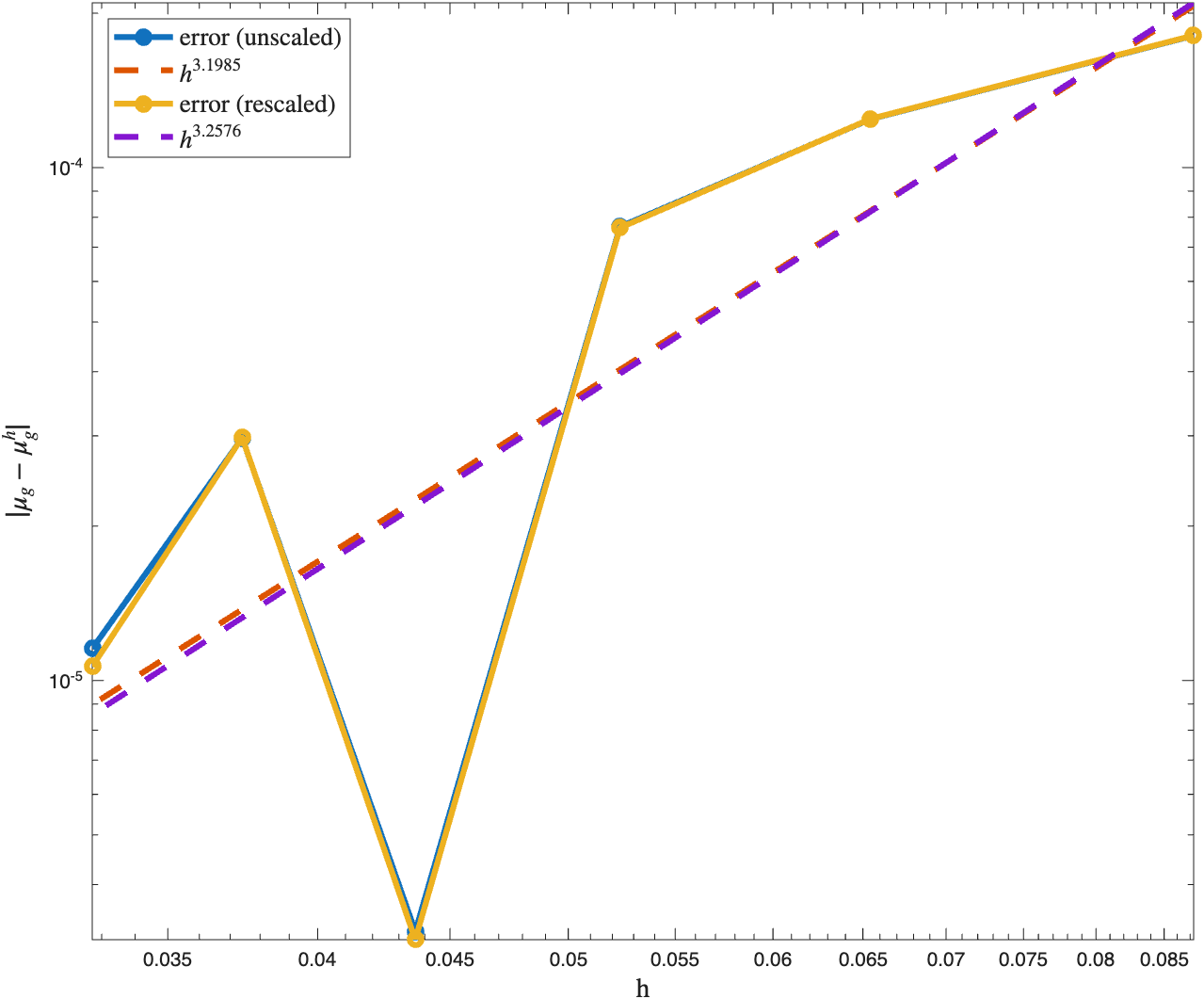}}
        \hspace{8mm}    
	\subfloat[]{\includegraphics[scale=0.35]{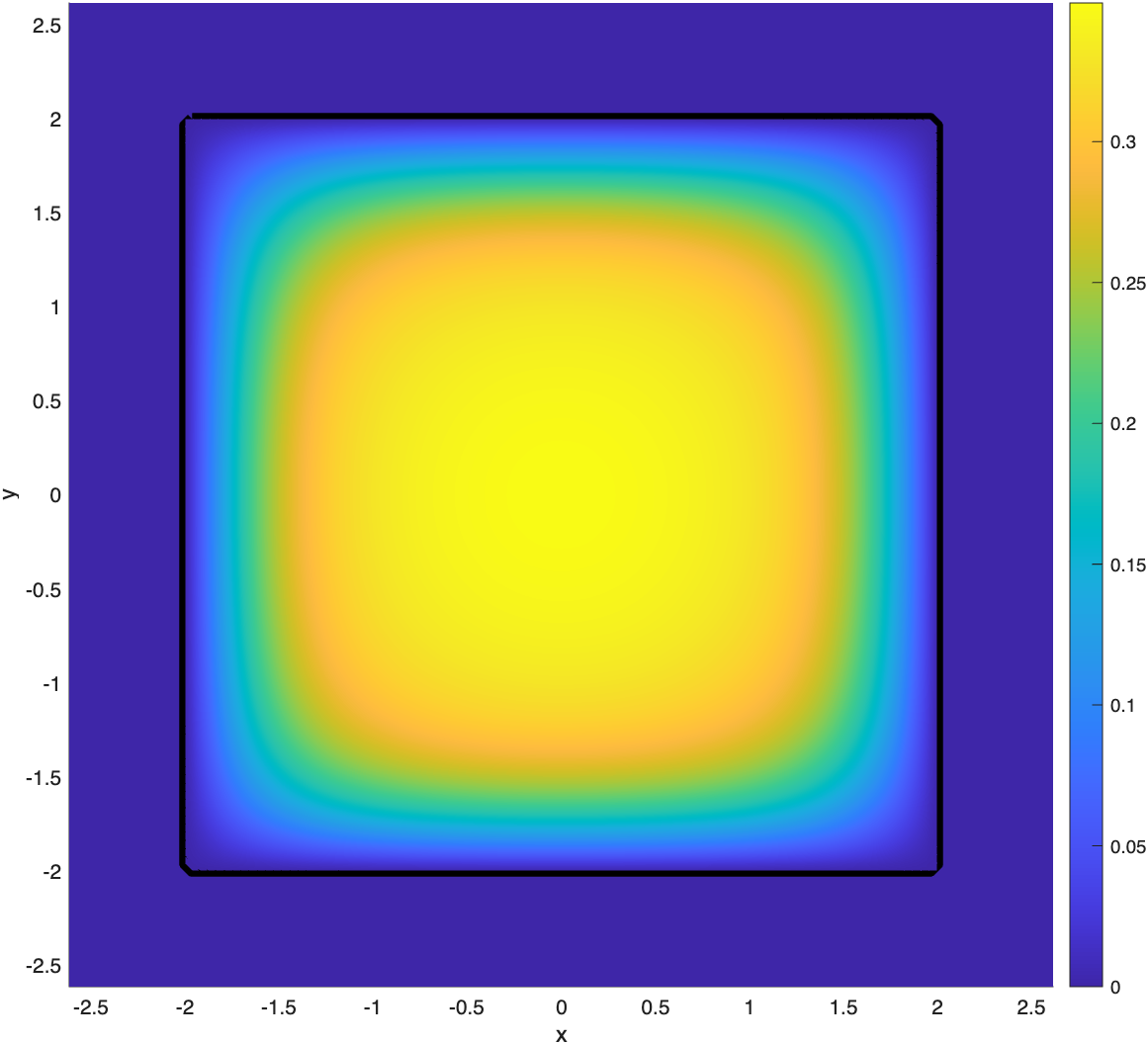}}
	\caption{Convergence plot for $\mu_g$ (a) and solution snapshot (b) in the square domain with $\beta = 50$ and harmonic potential}
	\label{fig:sh}
\end{figure}

Next, we consider the superposition of the harmonic potential plus the optical lattice
$
V(x,y) = \frac{1}{2}\left(x^2+y^2\right) + 50 \left(\sin^2 \left({\pi x}\right) + \sin^2\left({\pi y}\right)\right) 
$
in a manner similar to that of \cite{zhang2016krylov}. We use $\beta = 200$, and choose the computational domain $[-\pi,\pi]^2$ to enclose the circular domain $\Omega$ centered at $(0,0)$ with the radius of $2$. 
We obtain $\mu_g = 68.0881$ with the estimated convergence rate of $ 2.7097$ (Figure \ref{fig:ch}). We also report the convergence of the energy to the ground state energy value of $52.8319$ as well as the energy evolution in Figure \ref{fig:ch_e} to further validate our method. 

	\begin{figure}[H]
	\centering
	\subfloat[]{\includegraphics[scale=0.33]{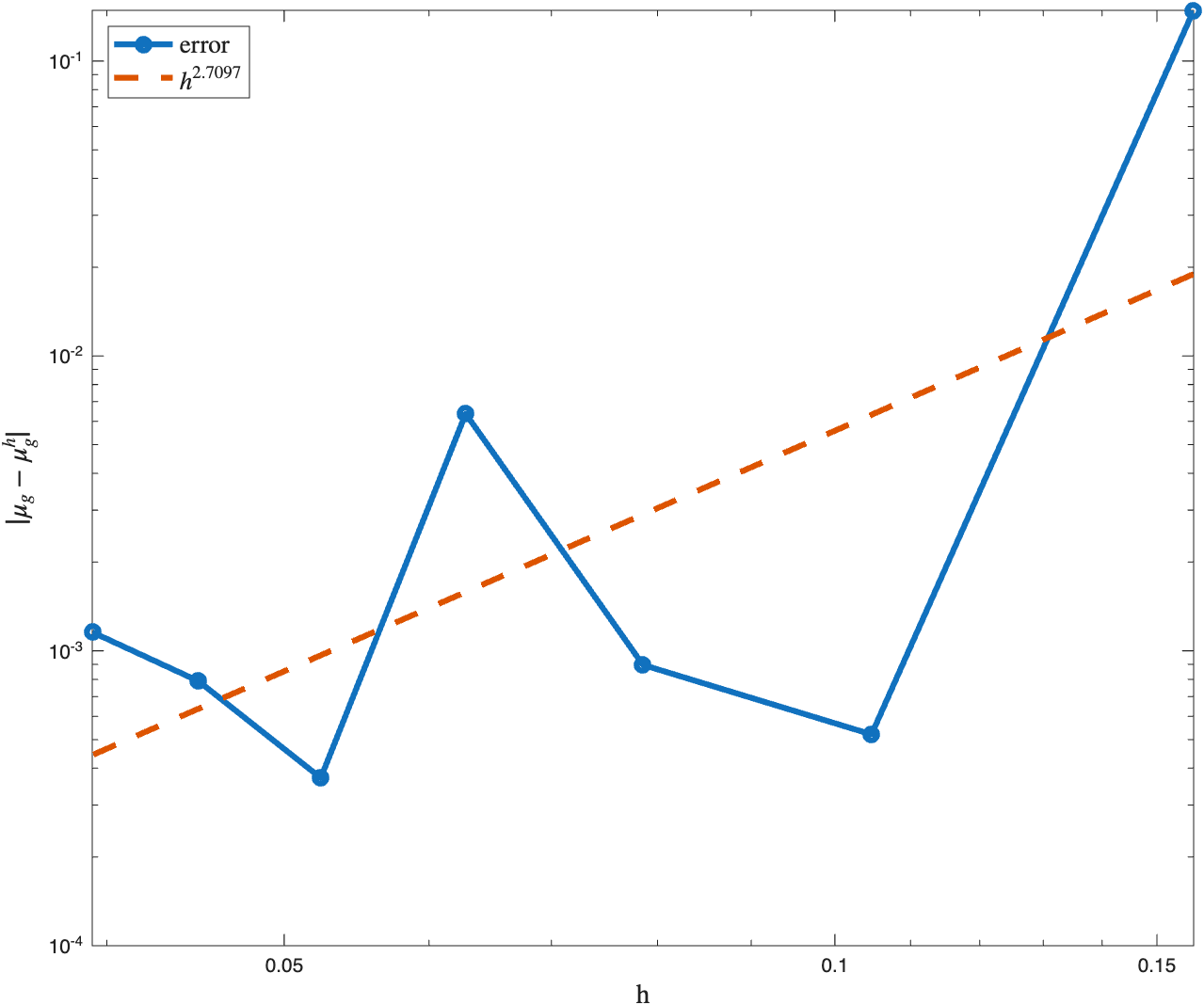}}
    \hspace{8mm} 
	\subfloat[]{\includegraphics[scale=0.35]{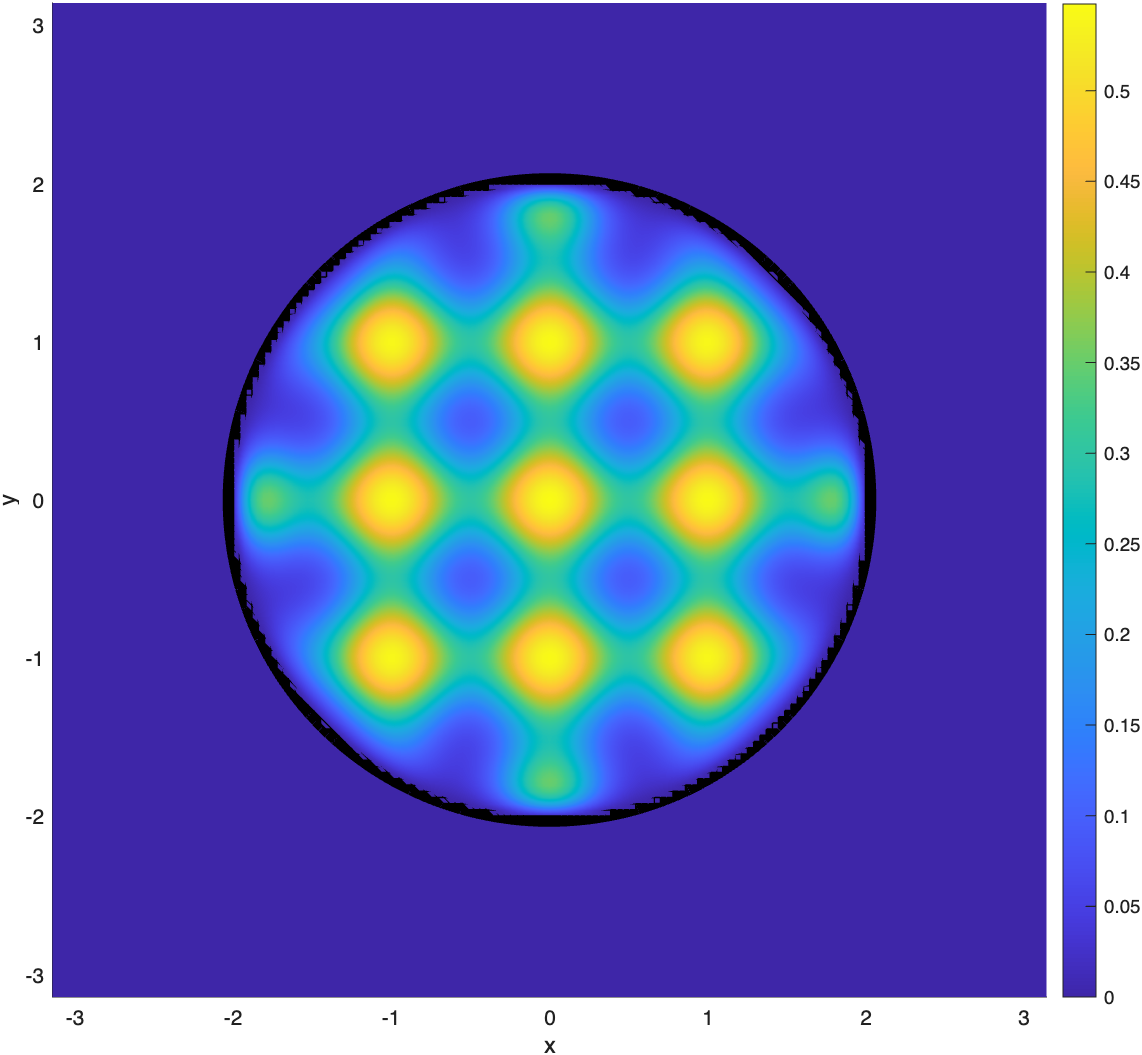}}
	\caption{Convergence plot for $\mu_g$ (a) and the solution snapshot (b) in the circular domain with $\beta = 200$ and harmonic-lattice potential.}
	\label{fig:ch}
\end{figure}

	\begin{figure}[H]
	\centering
	\subfloat[]{\includegraphics[scale=0.33]{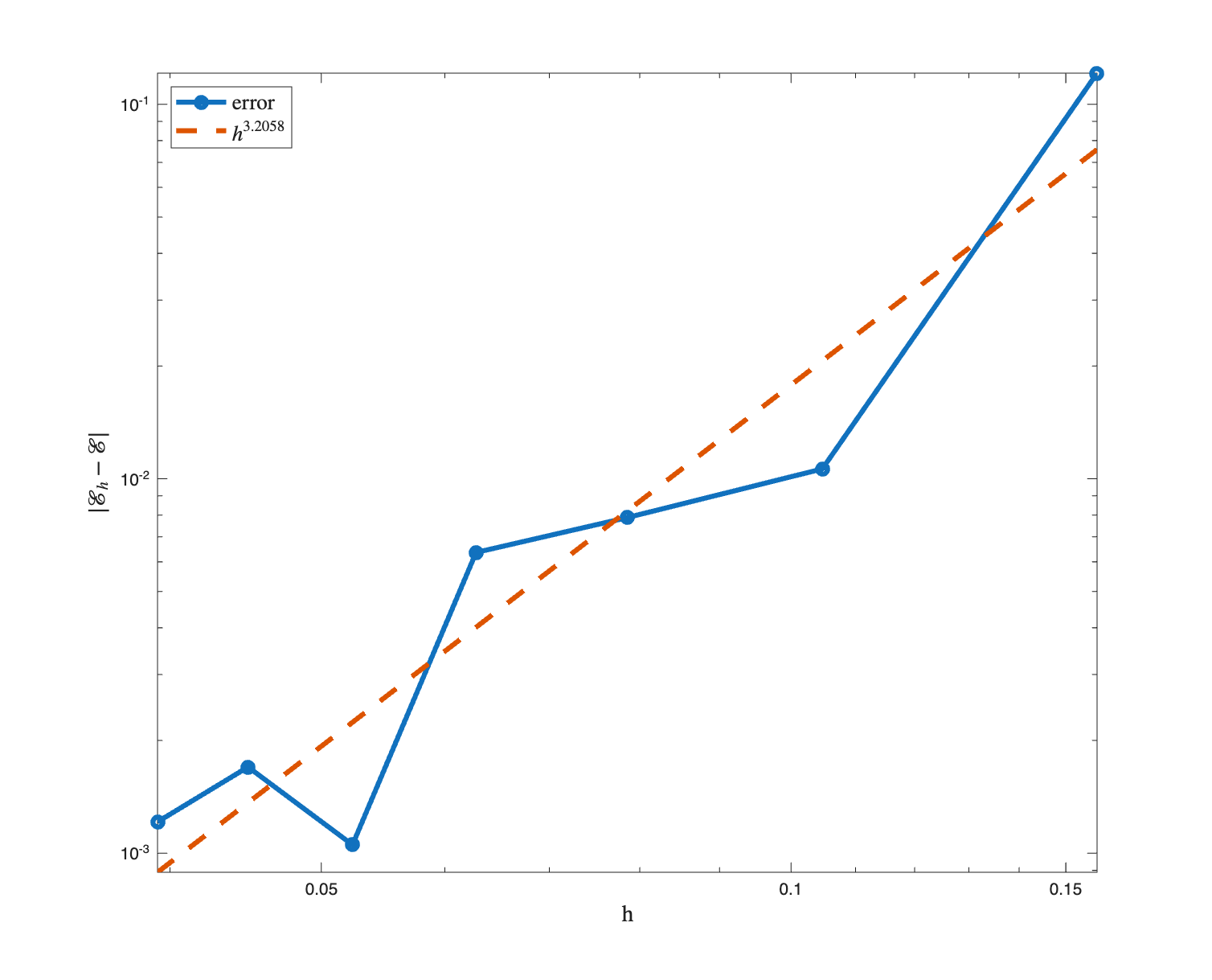}}
	\subfloat[]{\includegraphics[scale=0.33]{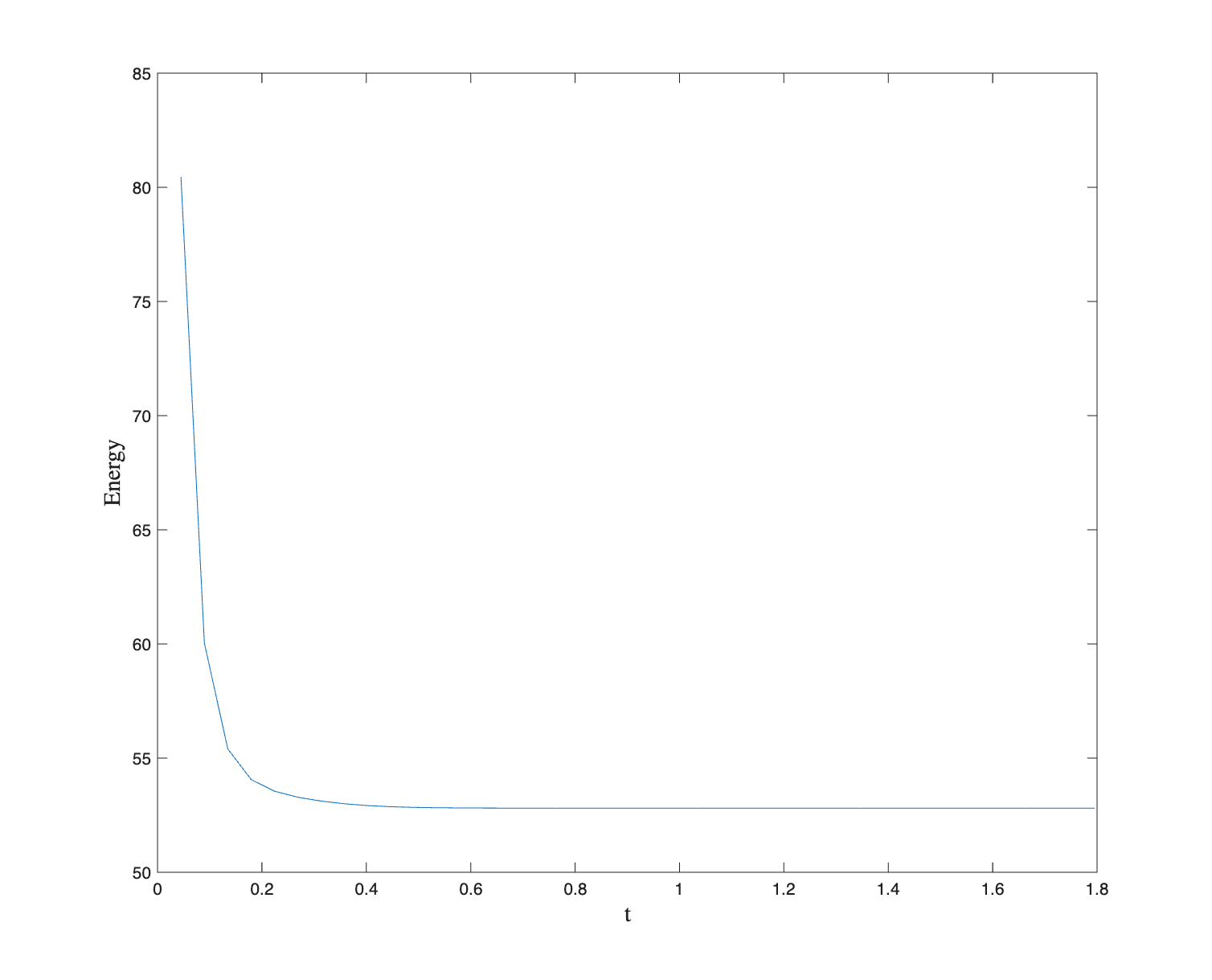}}
	\caption{Convergence plot for energy (a) and energy evolution plot (b) in the circular domain with $\beta = 200$ and harmonic-lattice potential.}
	\label{fig:ch_e}
\end{figure}

\subsection{Box potential in elliptical domain}
\label{subsec:be}
We choose the elliptical domain $\Omega = \left\{(x,y)\in \mathbb{R}^2: \frac{x^2}{\alpha^2} + \frac{y^2}{\gamma^2} = 1\right\}$ with $\alpha = 1.5 $, $\gamma = 2.0$ embedded in our computational domain $[-\pi,\pi]^2$. In \cite{salasnich2022bose}, BECs are confined in an elliptical waveguide by means of a quantum-curvature potential. We instead consider geometric confinement by setting set $\beta = 4, V(x,y) \equiv 0$ (i.e. box potential for $\Omega$). We obtain $\mu_g = 1.8055$ with the estimated convergence rate of $3.1819$. In addition to the ground state, we compute the first excited state and the corresponding chemical potential value of $ 3.0755$. We provide an illustration of the interaction between the geometric elliptic confinement and an ellipse-shaped potential \cite{adhikari2012dipolar} $V(x,y) = 4\left( \frac{x^2}{\gamma^2} + \frac{y^2}{\alpha^2}-0.3\right)^2$ for which the ground state chemical potential is $2.3411$. The results are presented in Figure \ref{fig:ebec}.

	\begin{figure}[H]
	\centering
	\subfloat[]{\includegraphics[scale=0.32]{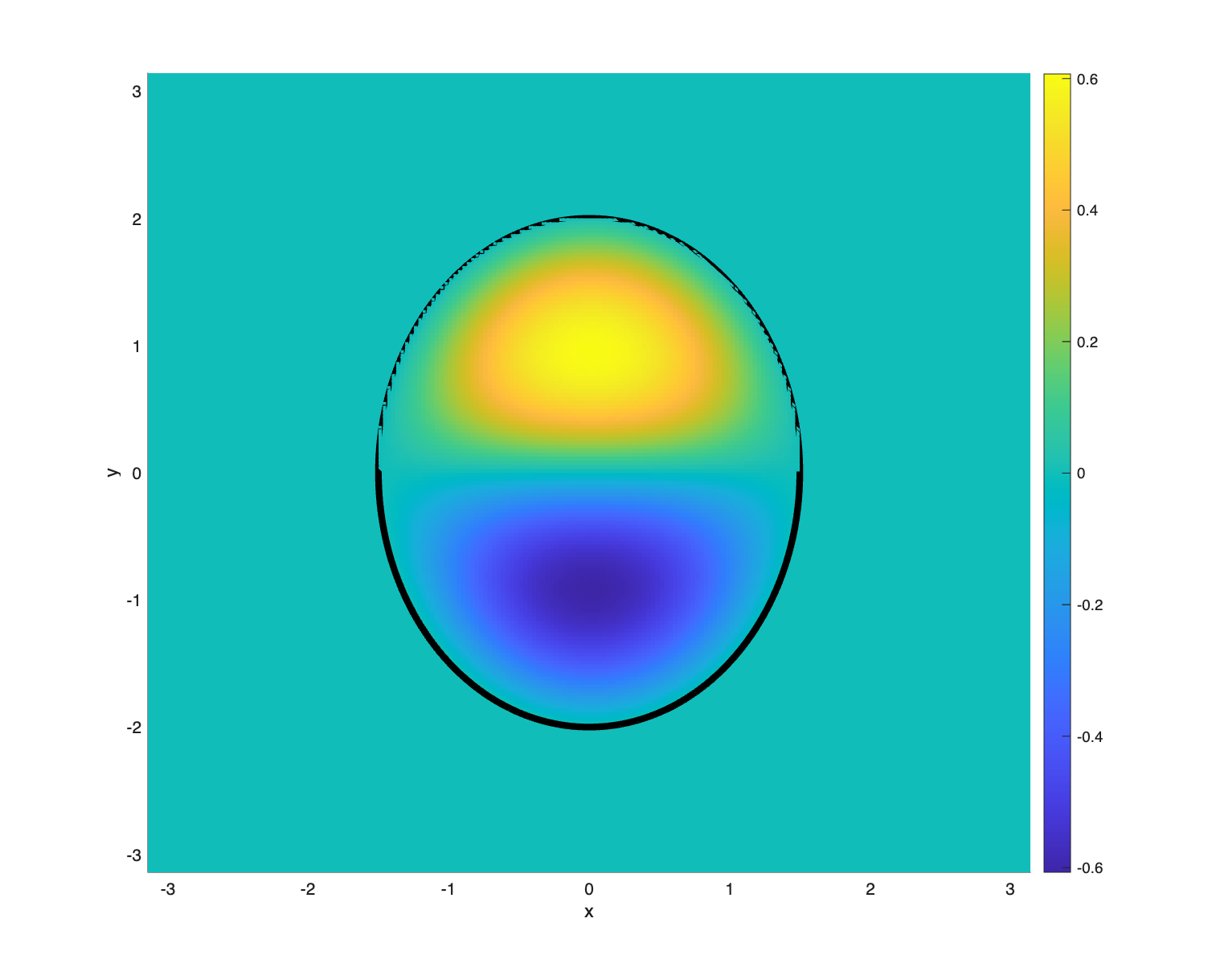}}
	\subfloat[]{\includegraphics[scale=0.35]{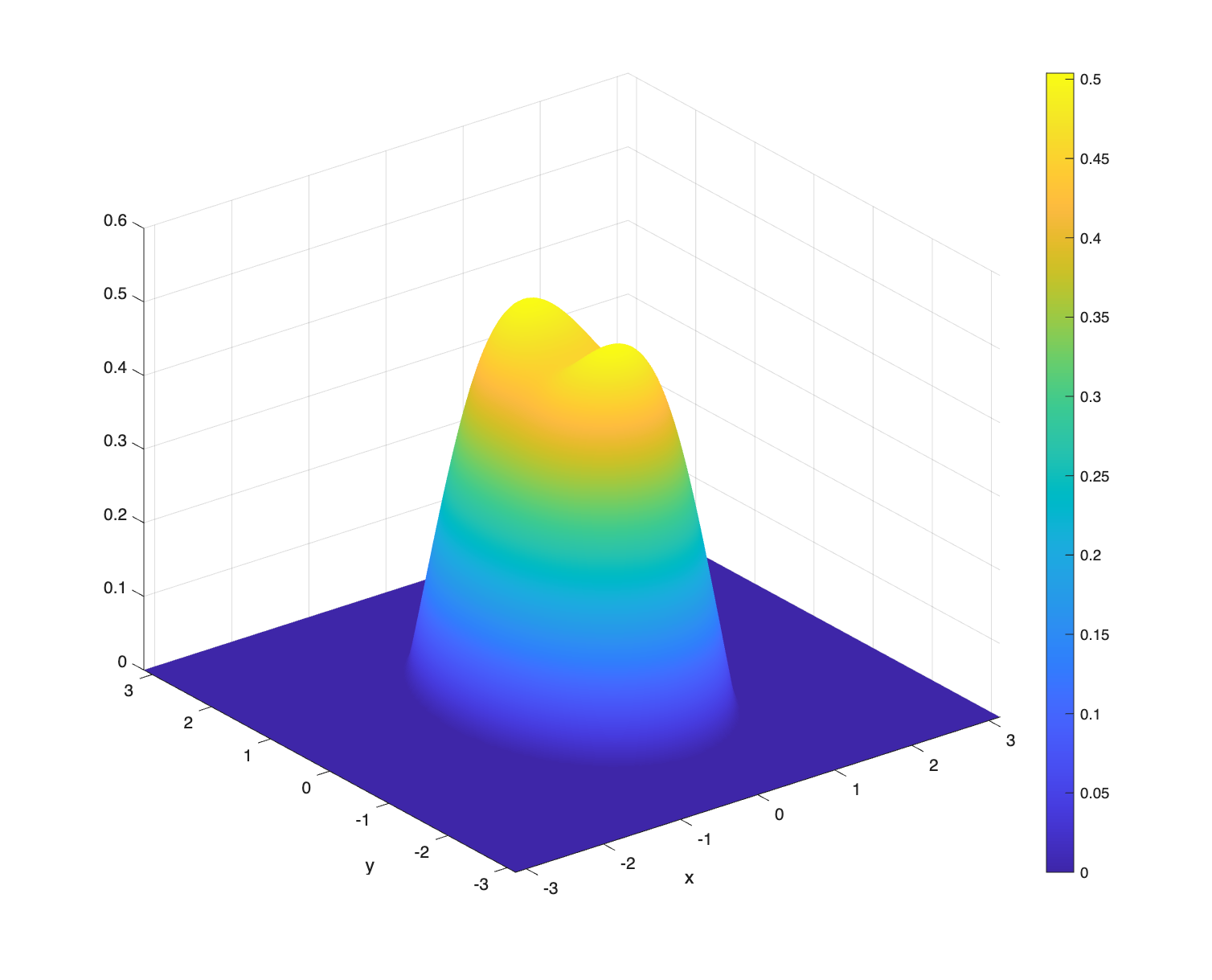}}
	\caption{Solution snapshots with $\beta = 4$: (a) first excited state subject to box potential; (b) ground state subject to ellipse-shaped potential.}
	\label{fig:ebec}
\end{figure}

\subsection{Obstacle potential in crescent-shaped domain}
We consider a moon-shaped domain in light of the growing interest in crescent-shaped waveguides \cite{golkebiewski2023spin} as well as the potential utility of convex domains to model optical tweezers for BEC clouds \cite{van2020perturbation}. We use a Gaussian obstacle potential  \cite{kwak2023minimum} by setting $V(x,y) = 4e^{(-2\left(x+0.35\right)^2-y^2 )}$ in the computational domain of $\left[{-\pi}/{3},{-\pi}/{3}\right]^2$. With $\beta = 10$, we obtain $\mu_g = 86.5431$ with the estimated convergence rate of $3.2460$; the potential peak is marked by $\diamondsuit$ in Figure \ref{fig:moon}. As in the previous subsection \ref{subsec:be}, we attempt to compute the excited states by using the excited states of the corresponding linear problem as the initial data. However, we observe that even with a small value of $\beta$ (e.g. $\beta = 0.001$), our method does not converge to the excited states but  to the ground states.

	\begin{figure}[H]
	\centering
	\subfloat[]{\includegraphics[scale=0.33]{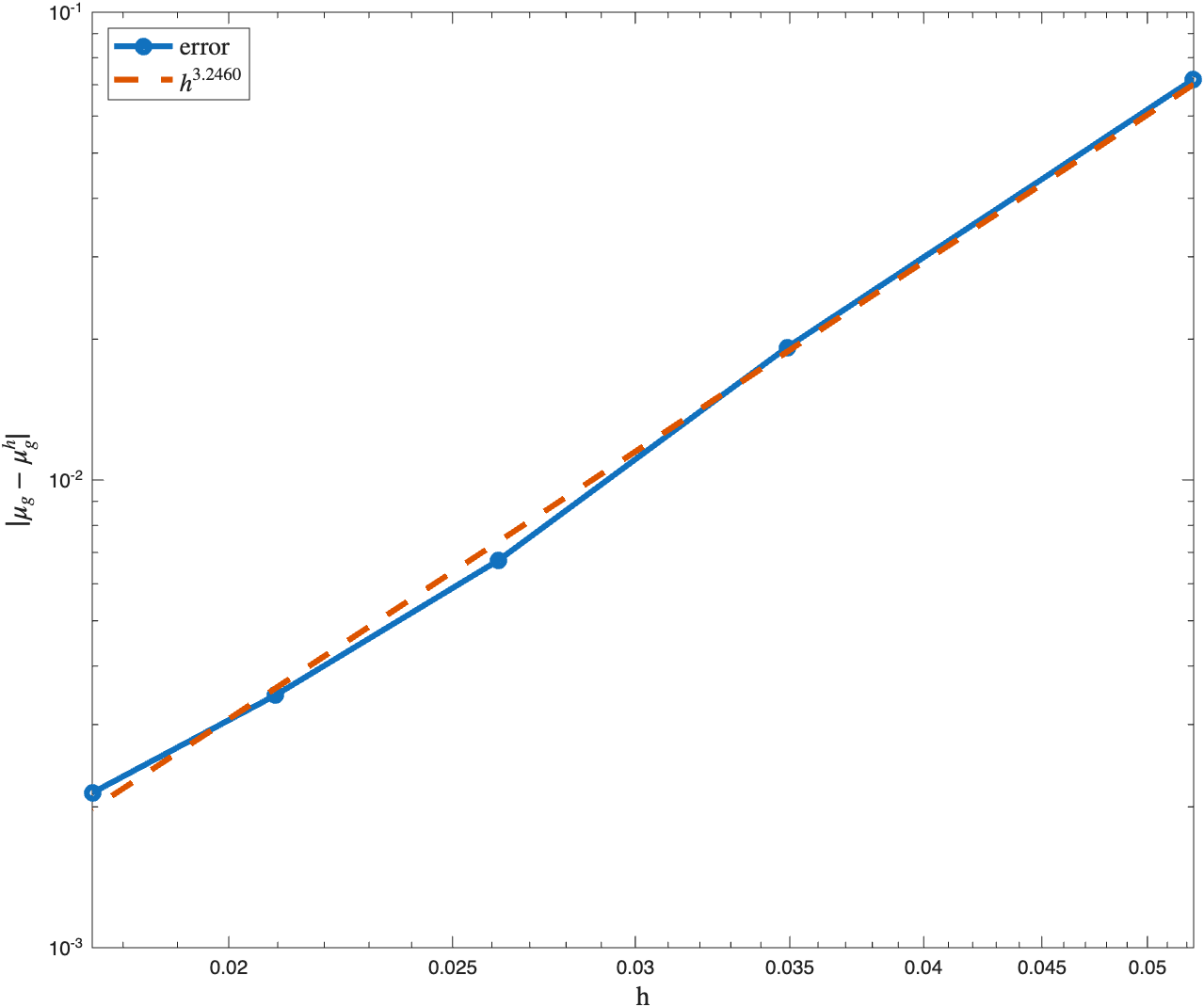}}
        \hspace{8mm} 
	\subfloat[]{\includegraphics[scale=0.33]{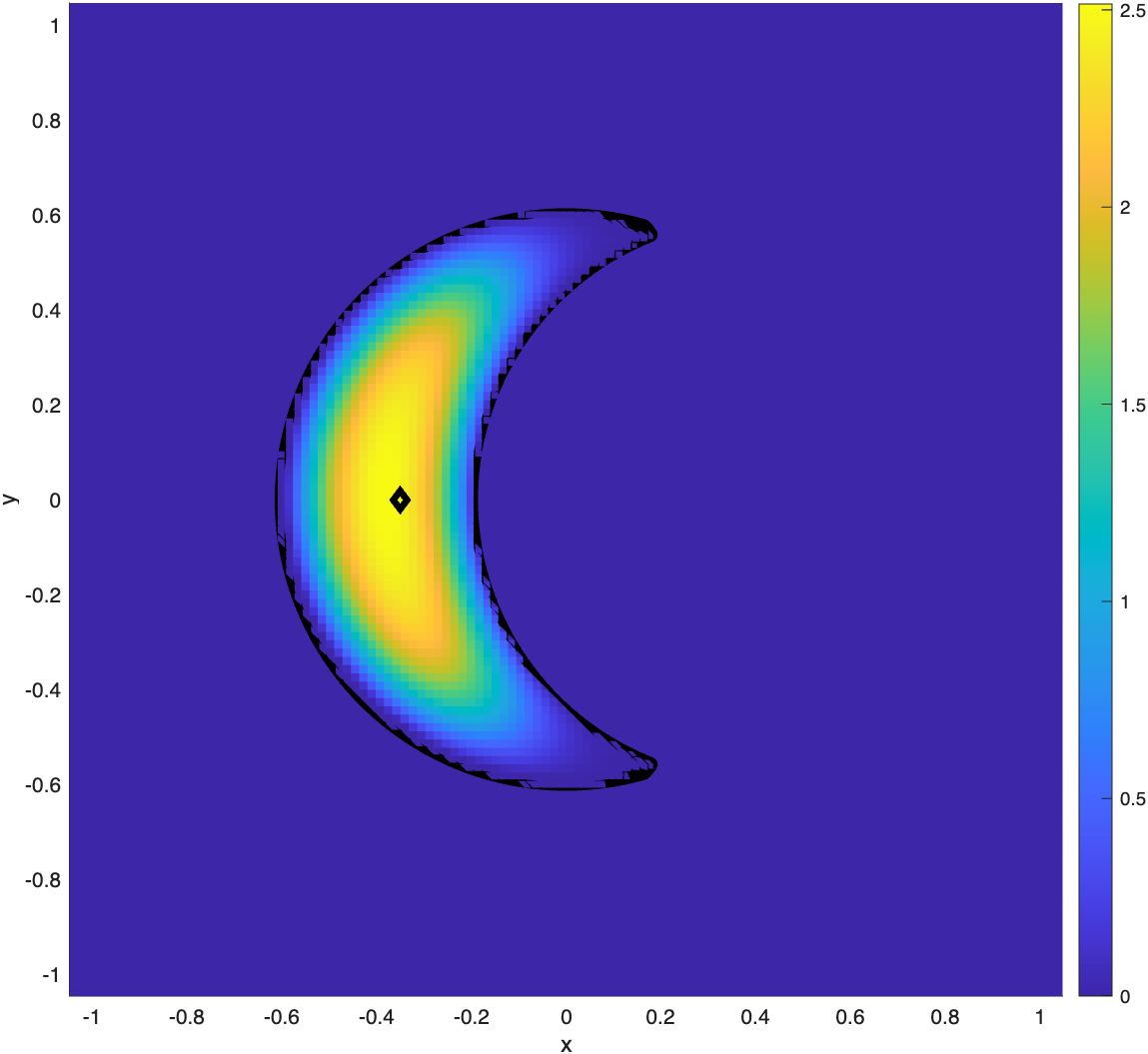}}
	\caption{Convergence plot for $\mu_g$ (a) and solution snapshot (b) in the moon-shaped domain with $\beta = 10$ subject to the Gaussian potential.}
	\label{fig:moon}
\end{figure} 

\subsection{Cubic-quintic interactions in a circular sector}
We solve the cubic-quintic GP equation (\ref{eq:cqgpe}) with $\beta = \gamma = 1$. We choose the quantum pendulum potential $V(x,y) = 1-\cos\left({2\pi}  \sqrt{x^2+y^2}\right)$ which models an optical lattice trap  \cite{luckins2018bose}. Our calculation yields $\mu_g = 2.432$, and due to geometric confinement, we note that the solution does not diverge, in contrast to the case noted in \cite{luckins2018bose}. We also consider the purely quintic case of $\beta =0, \gamma = 1$, but the results are omitted here, since they are qualitatively similar to the cubic quintic case presented in Figure \ref{fig:pie}.

	\begin{figure}[]
	\centering
	\subfloat[]{\includegraphics[scale=0.33]{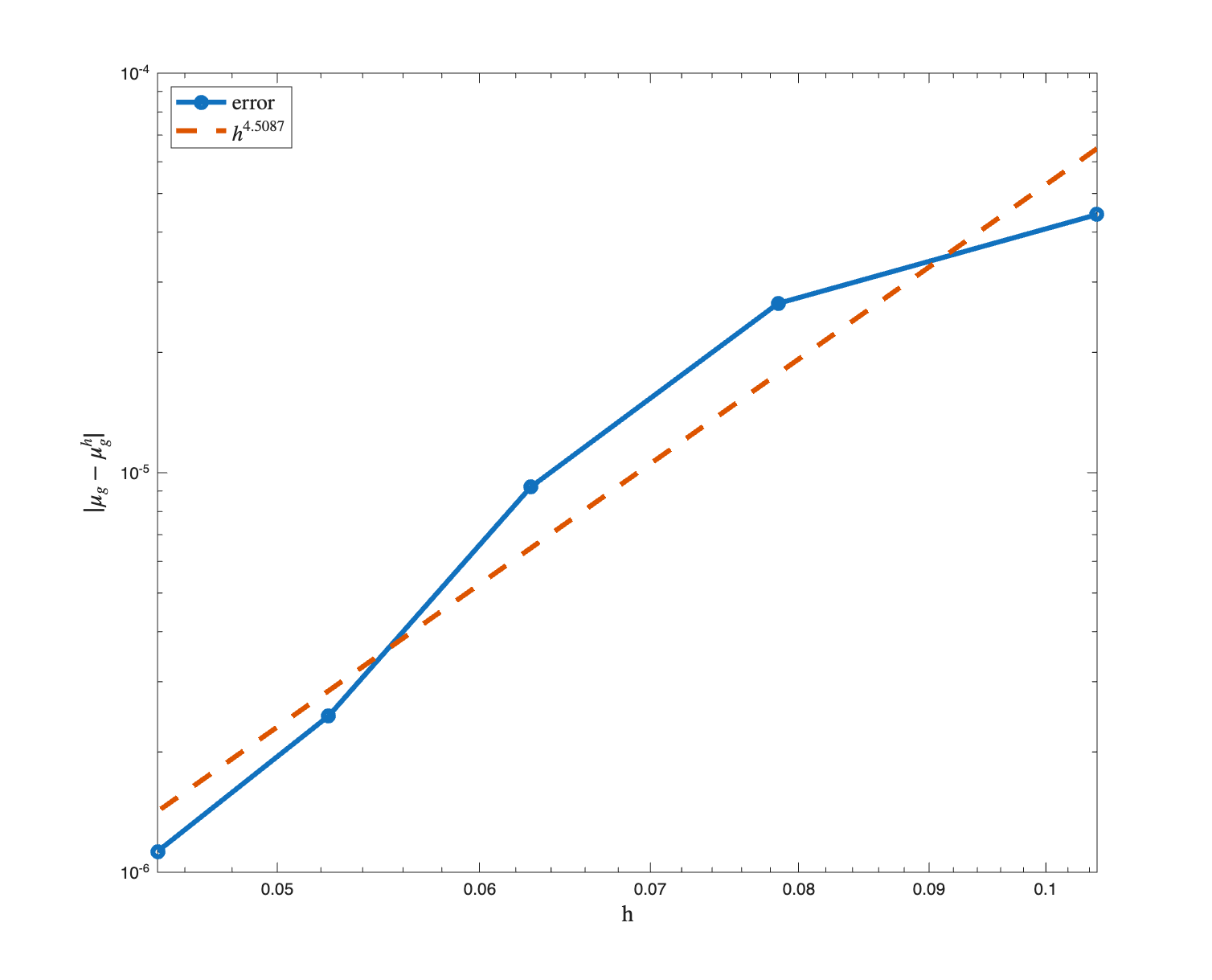}}
	\subfloat[]{\includegraphics[scale=0.33]{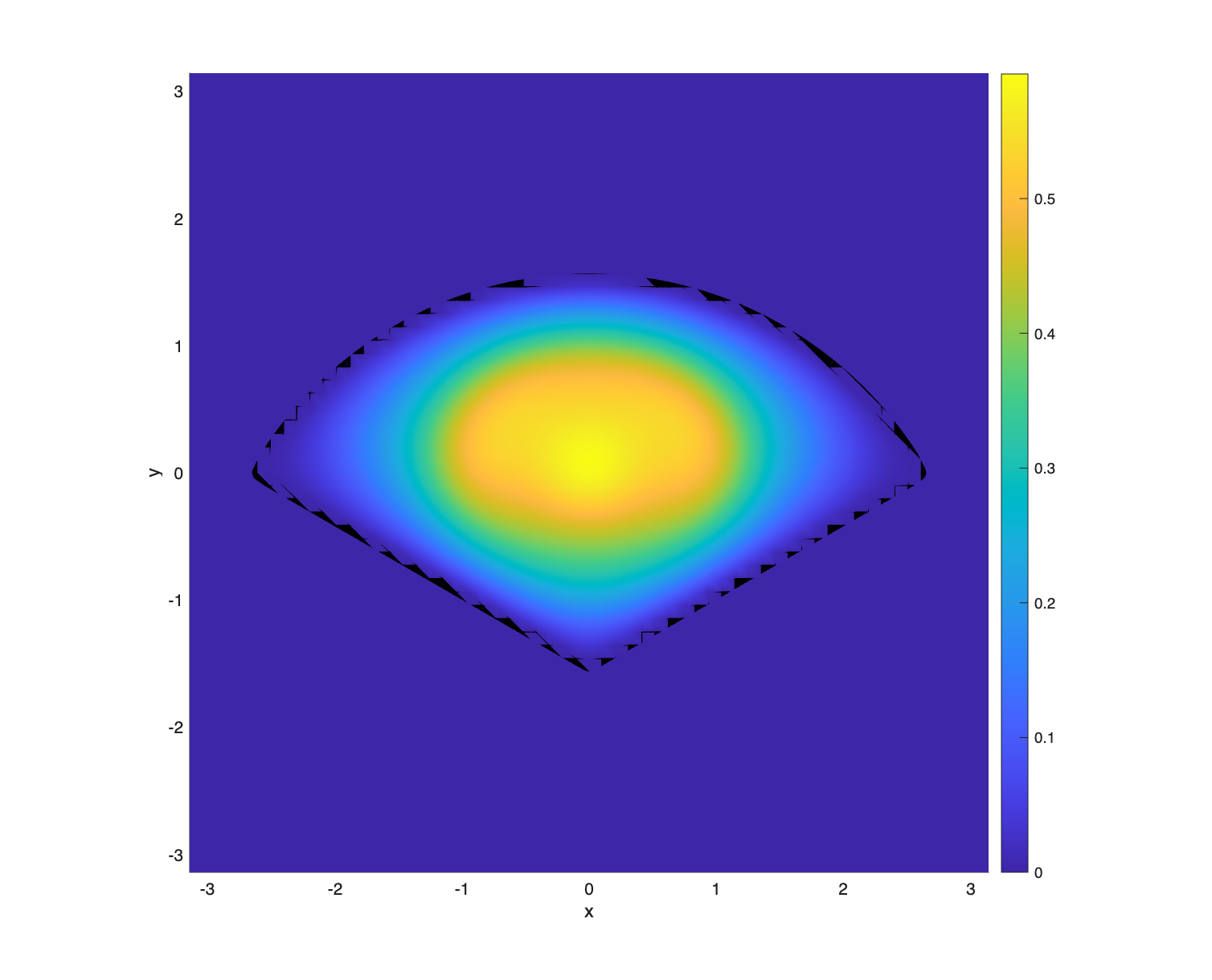}}
	\caption{Convergence plot for $\mu_g$ (a) and solution snapshot (b) in the circular sector with $\beta = \gamma = 1$ subject to the quantum pendulum potential.}
	\label{fig:pie}
\end{figure} 

\subsection{Modified GPE with the higher order interaction term}

In our last experiment, we revisit the elliptical domain and solve the modified GP equation with the higher-order interaction term (\ref{eq:hoigpe}). We take $\beta = \delta = 10$ with $V$ set to the box potential of the elliptical domain. Following the work \cite{ruan2018normalized}, we choose a small, fixed time step size $k = 0.001$ to illustrate the spatial accuracy of our method (Figure \ref{fig:e_hoi}). We obtain $\mu_g = 6.1360$ and the corresponding ground state energy of $5.7685$.

	\begin{figure}[H]
	\centering
	\subfloat[]{\includegraphics[scale=0.33]{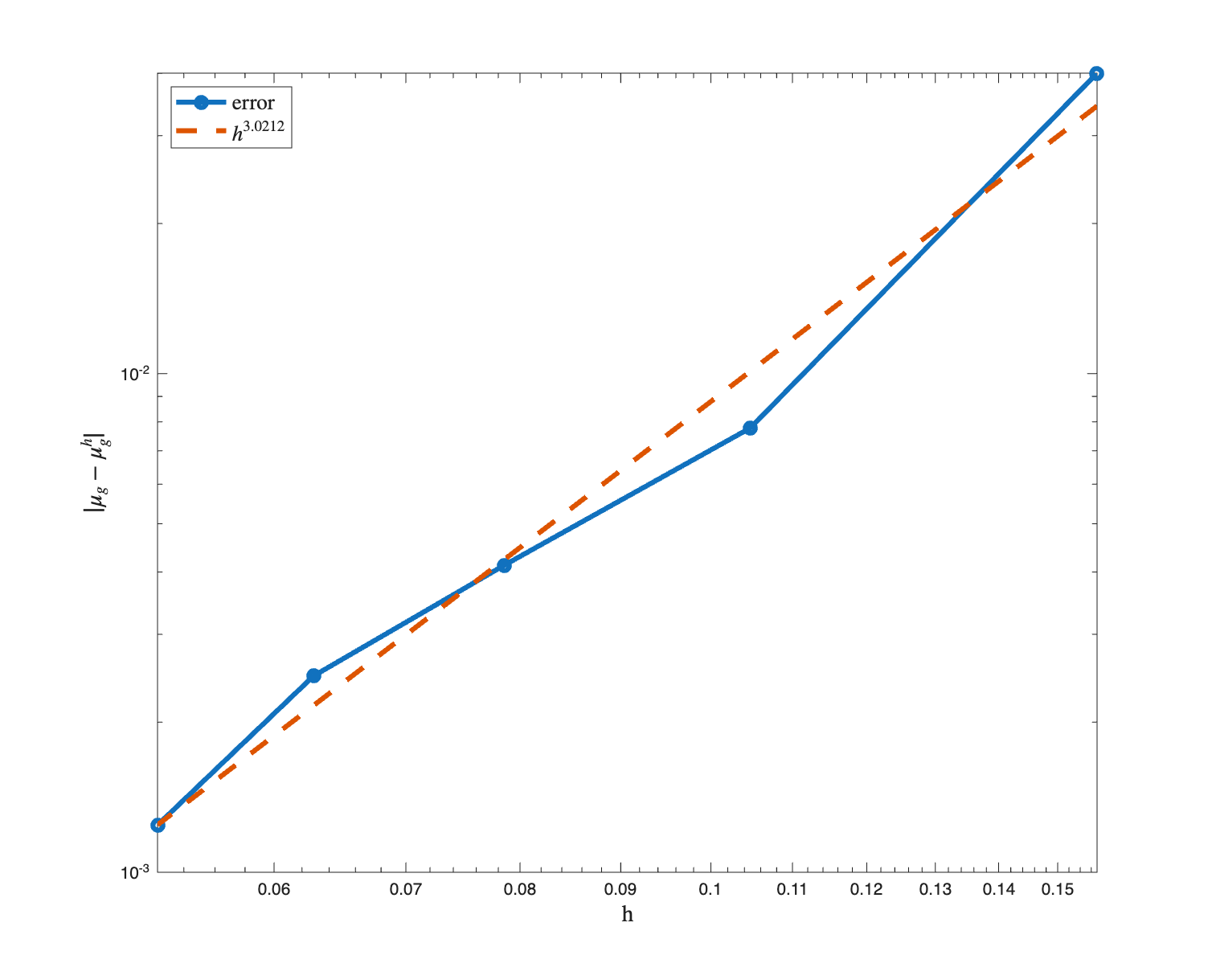}}
	\subfloat[]{\includegraphics[scale=0.33]{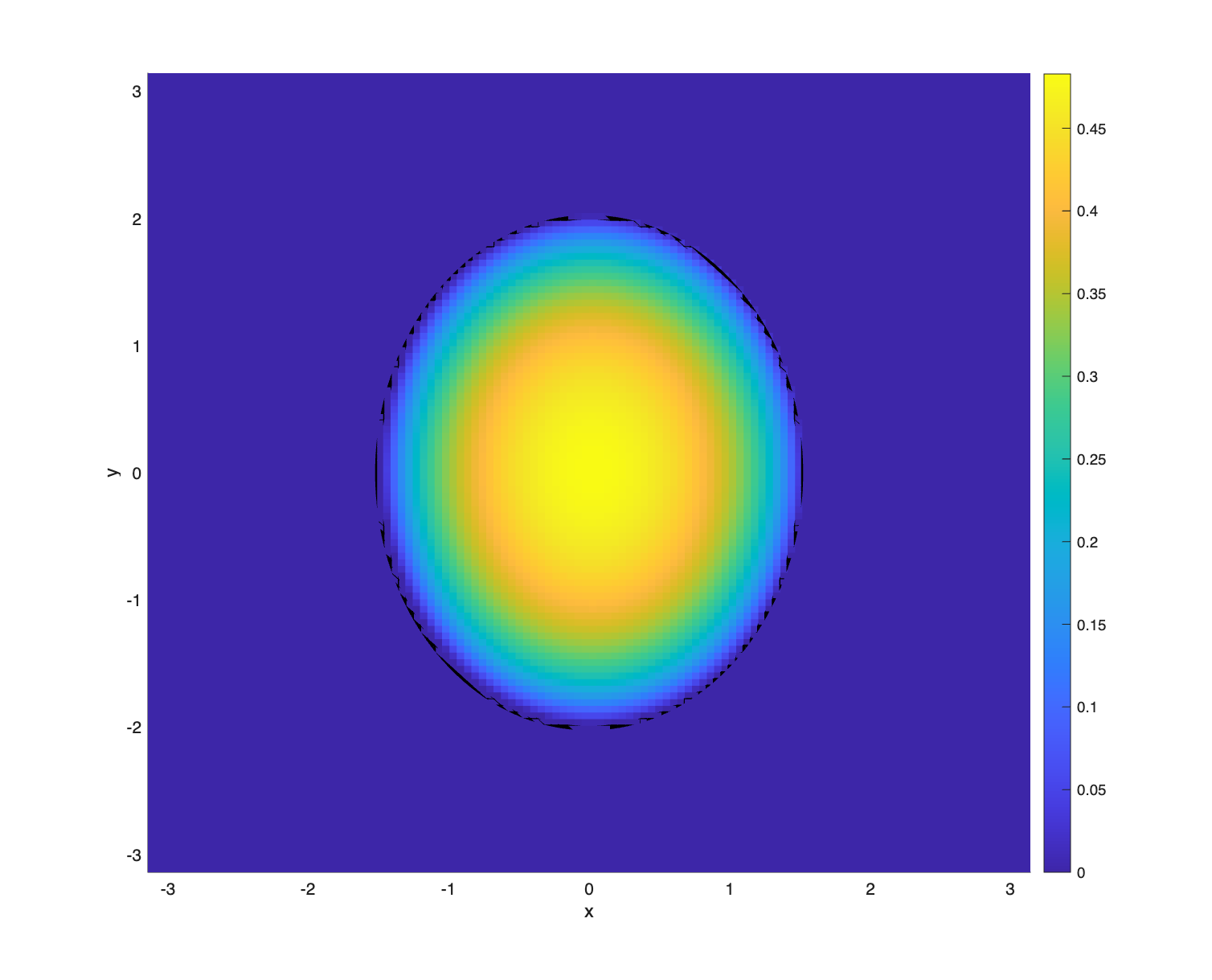}}
	\caption{Convergence plot for $\mu_g$ (a) and solution snapshot (b) in the elliptical domain with $\beta = \delta = 10$ and the box potential.}
	\label{fig:e_hoi}
\end{figure}

\section{Conclusion}  \label{sec:conc}
In this paper, we have proposed the level-set based BEFD method to compute the ground states of BEC in bounded domains with curved boundary. At its core, our method is a ghost point method in which the interpolation weights for the ghost points are automatically and explicitly computed by constant extension of nodal basis functions. Our numerical experiments indicate that the proposed method is third-order accurate, without compromising the convergence order from the linear setting. The simplicity and flexibility of our method make it a viable tool for further numerical studies of the BECs to address relevant, important questions of geometric nature.

\section*{CRediT authorship contribution statement}
\textbf{Hwi Lee}: Conceptualization, Methodology, Software, Writing - Original Draft. 
\textbf{Yingjie Liu}: Methodology, Writing - Review $\&$ Editing.

\section*{Declaration of interests}
The authors declare that they have no known competing financial interests or personal relationships that could have appeared to influence the work reported in this paper.

\section*{Data availability}
No data was used for the research described in the article.

\bibliographystyle{elsarticle-num} 
\bibliography{references}

\end{document}